\documentclass[a4paper,10pt,twoside]{article}

\usepackage{tensor}
\usepackage[utf8]{inputenc}
\usepackage[UKenglish]{babel}
\usepackage{cite}
\usepackage{amssymb}
\usepackage{amsmath}
\usepackage{mathrsfs}
\usepackage{amsthm}
\usepackage[T1]{fontenc}
\usepackage{accents}
\usepackage[cm]{fullpage}
\usepackage{graphicx}
\usepackage{tensor}
\usepackage{relsize}
\usepackage{appendix}
\usepackage[affil-it]{authblk}

\usepackage{mathtools}
\mathtoolsset{showonlyrefs,showmanualtags}

\RequirePackage{lastpage}
\RequirePackage{latexsym}
\makeatletter
\ifx\@NODS\undefined\RequirePackage{dsfont}\fi
\ifx\@NOAMS\undefined\RequirePackage{amsmath,amsfonts,amssymb,amsthm}\fi
\makeatother

\RequirePackage{geometry}
\geometry{a4paper,portrait,left=3.5cm,right=3.5cm,top=3.5cm,bottom=3.5cm}

\RequirePackage{bera}
\RequirePackage[pdftex,pagebackref=false]{hyperref}
\hypersetup{pdfborder=0 0 0}
\hypersetup{pdfstartview={FitH}}

\title{{\Large \bfseries{Brownian bridges to submanifolds}}}

\author{James Thompson%
  \footnote{University of Luxembourg, Email: \texttt{james.thompson@uni.lu}}}

\date{\today}

\makeatletter
\def\@MRExtract#1 #2!{#1}
\newcommand{\MR}[1]{
  \xdef\@MRSTRIP{\@MRExtract#1 !}%
  \href{http://www.ams.org/mathscinet-getitem?mr=\@MRSTRIP}{MR-\@MRSTRIP}}
\makeatother

\makeatletter
\renewenvironment{thebibliography}[1]{%
  \section*{\refname
    \@mkboth{\MakeUppercase\refname}{\MakeUppercase\refname}}%
  \phantomsection%
  \addcontentsline{toc}{section}{\refname}%
  \list{\@biblabel{\@arabic\c@enumiv}}%
  {\settowidth\labelwidth{\@biblabel{#1}}%
    \small%
    \setlength{\labelsep}{0.4em}%
    \setlength{\leftmargin}{\labelwidth}%
    \addtolength{\leftmargin}{\labelsep}%
    \setlength{\itemsep}{-.25em}%
    \@openbib@code
    \usecounter{enumiv}%
    \let\p@enumiv\@empty
    \renewcommand\theenumiv{\@arabic\c@enumiv}}%
  \sloppy\clubpenalty4000\@clubpenalty\clubpenalty\widowpenalty4000%
  \sfcode`\.\@m}{%
  \def\@noitemerr{%
    \@latex@warning{Empty `thebibliography' environment}}%
  \endlist}
\makeatother

\makeatletter
\ifx\@NODS\undefined%

\let\mathbb=\mathds
\else%
\fi
\makeatother

\DeclareMathOperator{\vol}{vol}
\DeclareMathOperator{\Ric}{Ric}

\DeclareMathOperator{\Cut}{Cut}

\DeclareMathOperator{\Hess}{Hess}
\DeclareMathOperator{\BesselI}{BesselI}
\DeclareMathOperator{\sech}{sech}
\DeclareMathOperator{\expN}{exp}

\newcommand{\dimm}{m}
\newcommand{\dimn}{n}

\newcommand{\mom}{p}
\newcommand{\secur}{\kappa}
\newcommand{\brname}{Fermi }
\newcommand{\gausscon}{\sigma^2}
\newcommand{\gcnpr}{c}

\newtheorem{theorem}{Theorem}[section]
\newtheorem{lemma}[theorem]{Lemma}
\newtheorem{proposition}[theorem]{Proposition}
\newtheorem{corollary}[theorem]{Corollary}
\newtheorem{definition}[theorem]{Definition}

\newtheorem{example}[theorem]{Example}

\begin{document}

\maketitle

\begin{abstract}%
\noindent%
   {We introduce and study Brownian bridges to submanifolds. Our method involves proving a general formula for the integral over a submanifold of the minimal heat kernel on a complete Riemannian manifold. We use the formula to derive lower bounds, an asymptotic relation and derivative estimates. We also see a connection to hypersurface local time. This work is motivated by the desire to extend the analysis of path and loop spaces to measures on paths which terminate on a submanifold.}\\[1em]%
   {\footnotesize%
     \textbf{Keywords: }{Brownian bridge ; local time ; heat kernel ; submanifold}\par%
     \noindent\textbf{AMS MSC 2010: }%
     {58J65 ; 53B21 ; 60J55}\par
   }%footnotesize
 \end{abstract}

\section*{Introduction}

Brownian motion is a stochastic process naturally associated to any Riemannian manifold. The distance between Brownian motion and a submanifold was studied by the author in \cite{PAPERONE}, using an inequality for the Laplacian of the distance function based on Jacobi field comparison. The Brownian bridge is given by conditioning Brownian motion to hit a fixed point at a fixed positive time. We extend this concept by replacing the fixed point with a submanifold. There has so far been very little research on this topic, even if the ambient space is Euclidean, although such processes have appeared in the context of Wiener measure approximation, as in \cite{MR1803450}.

Suppose that $M$ is a complete and connected Riemannian manifold of dimension $\dimm$ and that $N$ is a closed embedded submanifold of $M$ of dimension $\dimn \in \lbrace 0,\ldots,\dimm-1\rbrace$. Assume for this introduction that $M$ is compact. Denote by $p^M$ the heat kernel on $M$ and define
\begin{equation}\label{eq:ihkintro}
p^M_t(x,N) := \int_N p^M_t(x,y)\,d\vol_N(y)
\end{equation}
where $\vol_N$ denotes the measure on $N$ induced by the embedding. For the case in which $N$ is a point, $\vol_N$ is a delta measure and the integral reduces to a pointwise evaluation. The integrated heat kernel will be the central object of study. Some examples of it are given in Section \ref{sec:localtime}. Following \cite{PAPERONE}, it can be identified as the time derivative of a mean hypersurface local time. A comparison theorem for this object is included in Subsection \ref{ss:hltcomp}.

In Section \ref{sec:bbridges}, denoting by $X(x)$ a Brownian motion on $M$ starting at $x$, we fix $T>0$ and prove that if $t \in \left[0,T\right)$ then for a bounded $\mathcal{F}_t^{X(x)}$-measurable random variable $F$ we have
\begin{equation}
\mathbb{E}\left[F\vert X_T(x) \in N \right] = \frac{\mathbb{E}\left[ p^M_{T-t}(X_t(x),N)F \right]}{p^M_T(x,N)}.
\end{equation}
This gives rise to a diffusion over the time interval $\left[0,T\right)$ starting at $x$ and arriving in $N$ at time $T$, with time-dependent infinitesimal generator
\begin{equation}
\frac{1}{2}\triangle + \nabla \log p^M_{T-t}(\cdot,N).
\end{equation}
We call this a \textit{Brownian bridge to a submanifold}. To show that it is a semimartingale on $\left[0,T\right]$ we prove the gradient estimate
\begin{equation}\label{eq:gradestintro}
\| \nabla \log p^M_t(x,N)\|^2 \leq C\left(\frac{1}{t}+\frac{n}{t}\log\frac{1}{t}+\frac{d^2(x,N)}{t^2}\right)
\end{equation}
and derive a Hessian estimate as corollary. These estimates are given in Section \ref{sec:deriv} by Theorem \ref{th:derivativeestiamte} and Corollary \ref{cor:hesscor}, respectively. We prove them using the method of Stroock \cite{MR1439536}, the inductive element of which had previously been discovered by Cheng, Li and Yau \cite{MR630777}. Our estimates generalize the main theorem of Engoulatov \cite{MR2253731} and the gradient and Hessian estimates of Hsu \cite{MR1618694}, who considered only the one point case.

To prove the gradient estimate we require a suitable lower bound on $p^M_t(x,N)$. In Section \ref{sec:fbridges} we define $r_N(\cdot):= d(\cdot,N)$ and introduce the diffusion on $M$ starting at $x$ with time dependent generator
\begin{equation}
\frac{1}{2}\triangle - \frac{r_N }{T-t}\frac{\partial}{\partial r_N}
\end{equation}
where $\frac{\partial}{\partial r_N}$ denotes differentiation in the radial direction. We call such a process a \textit{\brname bridge}. It is an example of a \textit{submanifold bridge process}, by which we mean a Brownian motion with drift which arrives in a submanifold at a fixed positive time. This terminology was introduced by Ndumu \cite{MNNTHESIS}. In Section \ref{sec:Sechkform} we prove the main result of this article, Theorem \ref{th:maintheorem}, which in terms of a \brname bridge $\hat{X}(x)$ yields the formula
\begin{equation}\label{eq:hkformintro}
p^M_T(x,N) = (2\pi t)^{-\frac{(\dimm-\dimn)}{2}}\exp\left[-\frac{d^2(x,N)}{2t}\right] \lim_{t\uparrow T}\mathbb{E}\left[\exp\left[\int_0^t \frac{r_N(\hat{X}_s(x))}{T-s} \bigg(d\mathbf{A}_s+d\mathbf{L}_s\bigg)\right]\right].\\[3.0mm]
\end{equation}
Here $d\mathbf{A}$ is an absolutely continuous random measure, which takes into account the geometry of $M$ in between $N$ and the cut locus, while $d\mathbf{L}$ is a singular continuous random measure, which takes in to account the effect of the cut locus itself. If $N$ is a point, this yields a formula for the heat kernel itself.

Theorem \ref{th:maintheorem} is proved using a modification of the method of Elworthy and Truman \cite{MR729719}. Elworthy and Truman's heat kernel formula and Ndumu's extension of it to integrated heat kernels \cite{MNNTHESIS,MR1100896,MR1401071,MR2786702} typically require some invertibility of the exponential map and so cannot be applied directly to manifolds with a cut locus. The problem of extending these formulae to manifolds with a cut locus was posed to the author by Xue-Mei Li. Theorem \ref{th:maintheorem} provides this extension and can also be adapted to include a drift and a potential, in analogy with the Schr\"{o}dinger equation for a magnetic field, considered by Watling \cite{KDWTHESIS,MR960168,MR1182603}.

The formulae in \cite{MR729719}, \cite{MNNTHESIS} and \cite{KDWTHESIS} are all expressed in terms of a stochastic process called the \textit{semiclassical bridge}. The semiclassical bridge will be used to obtain derivative formulae for kernels of Schr\"{o}dinger operators in a future publication by the author and Xue-Mei Li. It is similar to the \brname bridge but has an additional drift which depends on the Jacobian determinant of the exponential map. This additional drift results in the appearance of a potential function which is difficult to understand geometrically. The random measure $d\mathbf{A}$ can, on the other hand, be written explicitly in terms of Jacobi fields. 

This allows us in Section \ref{sec:lbar} to use Theorem \ref{th:maintheorem} and the estimate on $\mathbf{A}$ given in \cite{PAPERONE} to deduce a lower bound on the integrated heat kernel. This bound is stated in Theorem \ref{th:quadcurvlb}. It is used to prove an asymptotic relation, stated in Theorem \ref{th:genvar}, and in Section \ref{sec:deriv} it is used to prove the desired gradient and Hessian estimates. Potential applications of these results are discussed in Subsection \ref{ss:furtherapps}.

Although for this introduction we have assumed that $M$ is compact, this is not always necessary. In fact, we will usually only require some control on the Laplacian of the distance function. Geometric conditions for this were given in \cite{PAPERONE}. In particular, it was proved there that if there exists a function $\secur : \left[0,\infty\right) \rightarrow \left[0,\infty\right)$ such that one of the following conditions is satisfied off the union of $N$ and its cut locus:
\begin{description}
\item[(C1)] $\dimn \in \lbrace 0,\ldots,\dimm-1\rbrace$, the sectional curvature of planes containing the radial direction is bounded below by $-\kappa^2(r_N)$ and the absolute value of the principal curvature of $N$ is bounded by a constant $\Lambda \geq0$;
\item[(C2)] $\dimn=0$ and the Ricci curvature in the radial direction is bounded below by $-(\dimm-1)\kappa^2(r_N)$;
\item[(C3)] $\dimn=\dimm-1$, the Ricci curvature in the radial direction is bounded below by $-(\dimm-1)\kappa^2(r_N)$ and the absolute value of the mean curvature of $N$ is bounded by a constant $\Lambda\geq 0$;
\end{description}
then the inequality
\begin{equation}
\frac{1}{2}\triangle r_N^2 \leq (\dimm-\dimn) + \left(\dimn \Lambda + (\dimm-1)\kappa(r_N)\right)r_N
\end{equation}
holds off the cut locus pointwise and, by \cite{MR1894148}, on the whole of $M$ in the sense of distributions. Conditions of this type are used to deduce Theorem \ref{th:quadcurvlb}. 

The problem which originally motivated this piece of work was to extend Elworthy and Truman's formula to manifolds with a cut locus. Theorem \ref{th:maintheorem} was the first new result to be proved in this article and led to the remainder of the investigation. The author wishes to thank Xue-Mei Li for for advising this and the study of point to submanifold bridges, as either conditioned Brownian motions or as Brownian motions with directed drift, and for suggesting several useful references.

\section{The Heat Kernel Integrated over a Submanifold}\label{sec:localtime}

Suppose that $M$ is a complete and connected Riemannian manifold of dimension $\dimm$ and that $X(x)$ is a Brownian motion on $M$ starting at $x\in M$ defined up to an explosion time $\zeta(x)$. An open connected subset $D$ of $M$ is called a \textit{regular domain} if it has smooth boundary and compact closure. If $p^D$ denotes the Dirichlet heat kernel on a regular domain $D$ then $p^D$ is the fundamental solution to the heat equation on $D$ with Dirichlet boundary conditions. If $f$ is a non-negative measurable function on $D$ then $p^D$ satisfies
\begin{equation}
\mathbb{E}\left[ \mathbf{1}_{\lbrace t<\tau_D\rbrace} f(X_t(x))\right] = \int_M f(y)p^D_t(x,y)\,d\vol_M(y)
\end{equation}
for all $t>0$ where $\tau_D$ denotes the first exit time of $X(x)$ from $D$. If $\lbrace D_i\rbrace_{i=1}^{\infty}$ is an exhaustion of $M$ by regular domains then the minimal heat kernel on $M$ is given by $p^M:= \lim_{i\uparrow \infty}p^{D_i}$. It is the minimal fundamental solution of the heat equation on $M$ and coincides with the transition densities of Brownian motion. 

\begin{example}
On $\mathbb{R}^\dimm$ the heat kernel is given by the Gauss-Weierstrass kernel
\begin{equation}\label{eq:euclideanhk}
p_t^{\mathbb{R}^\dimm} (x,y) = {\left( 2 \pi t \right) }^{-\frac{\dimm}{2}} \exp \left(-\frac{d^2(x,y)}{2t} \right)
\end{equation}
for $x,y \in \mathbb{R}^\dimm$ and $t>0$.
\end{example}

\begin{example}
On $\mathbb{H}^{3}_\secur$, the hyperbolic space of dimension $3$ with constant sectional curvatures $\secur<0$, there is the formula
\begin{equation}\label{eq:hkonhyp3}
p^{\mathbb{H}^{3}_\secur}_t(x,y) = (2\pi t)^{-\frac{3}{2}}\exp\left[-\frac{d^2(x,y)}{2t}\right]\underbrace{\frac{\sqrt{-\secur}d(x,y)e^{\frac{\secur t}{2}}}{\sinh \left(\sqrt{-\secur}d(x,y)\right)}}_{\leq 1}
\end{equation}
for $x,y \in \mathbb{H}^{3}_\secur$ and $t>0$.
\end{example}

\begin{example}
On $\mathbb{S}^1$, the unit circle, there is the summation formula
\begin{equation}
p^{\mathbb{S}^1}_t(x,y) = (2\pi t)^{-\frac{1}{2}}\exp\left[-\frac{d^2(x,y)}{2t}\right]\underbrace{\sum_{k \in \mathbb{Z}} \exp\left[-\frac{2\pi k (d(x,y)+\pi k)}{t}\right]}_{\geq 1}
\end{equation}
for $x,y \in \mathbb{S}^1$ and $t>0$, a result of the fact that $\mathbb{S}^1$ is covered by $\mathbb{R}$.
\end{example}

The articles \cite{MR1642767} and \cite{MR2752561} provide iterative formulae for the heat kernels on the standard hyperbolic and spherical spaces, respectively, of arbitrary dimension. It is generally difficult to find deterministic formulae, unless the manifold in question exhibits a certain amount of symmetry, as in the above examples.

Now suppose that $N$ is a closed embedded submanifold of $M$ of dimension $\dimn \in \lbrace 0,\ldots,\dimm-1\rbrace$ and recall the definition of the integrated heat kernel given by formula \eqref{eq:ihkintro}. If $D$ is a regular domain in $M$ then define similarly
\begin{equation}\label{eq:intdirhk}
p^D_t(x,N):= \int_N p^D_t(x,y)d\vol_N(y)
\end{equation}
for $x\in M$ and $t>0$, where $\vol_N$ denotes the measure on $N$ induced by the embedding.

\begin{example}
For $r>0$ denote by $\mathbb{S}^1(r)$ the circle of radius $r$ centred at the origin in $\mathbb{R}^2$. Then
\begin{equation}\label{eq:circinplane}
p^{\mathbb{R}^2}_t(x,\mathbb{S}^1(r))=rt^{-1}\exp\left[-\frac{\left(r^2+\|x\|^2\right)}{2t}\right]\BesselI\left(0,\frac{r\|x\|}{t}\right)
\end{equation}
for $x \in \mathbb{R}^2$ and $t>0$, where $\BesselI$ denotes the modified Bessel function of the first kind. A plot of the integrated kernel is given below in Figure \ref{fig:ring}.
\begin{figure}[th]
\centering
\includegraphics[width=0.4\textwidth]{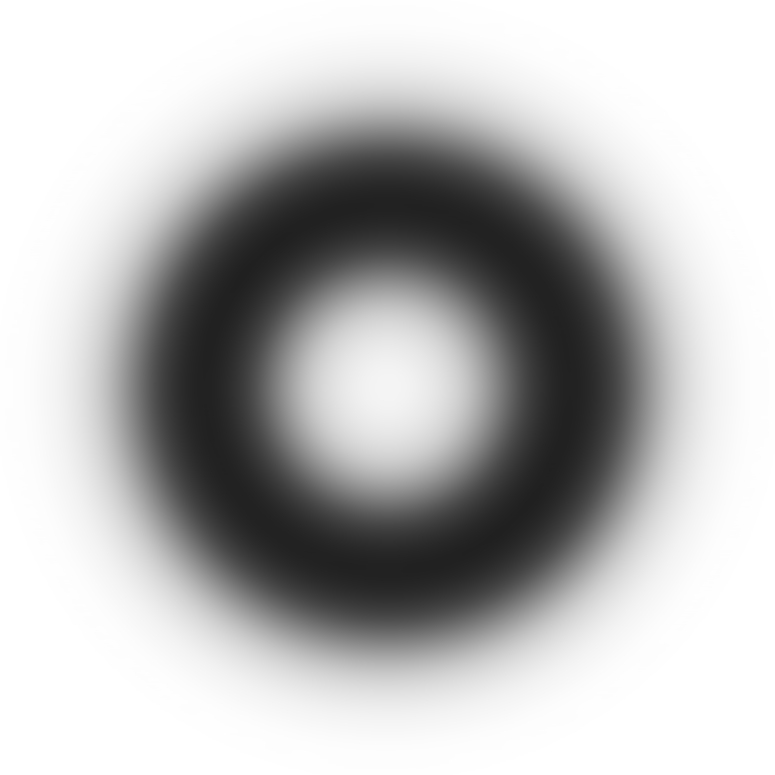}
\caption[The integral of the heat kernel on $\mathbb{R}^2$ over a circle]{\small A density plot of the integral over a circle of the heat kernel on $\mathbb{R}^2$, at a fixed small time $t>0$. The origin is located at the center of the image.}
\label{fig:ring}
\end{figure}
\normalsize
\end{example}

For $y \in M$ one can think of $p^M_{\cdot}(\cdot,y)$ as a solution to the heat equation on $M$ for the measure-valued initial condition given by the Dirac measure based at $y$. Similarly, $p^M_{\cdot}(\cdot,N)$ can be thought of as a solution to the heat equation on $M$ for the measure-valued initial condition $\vol_N$. For example, if $N$ is a closed embedded surface in $\mathbb{R}^3$ uniformly heated at time zero then $p^{\mathbb{R}^3}_{\cdot}(\cdot,N)$ describes how the heat diffuses for positive times. Alternatively, if $N$ is a closed embedded loop in $\mathbb{R}^3$ then $p^{\mathbb{R}^3}_{\cdot}(\cdot,N)$ could be used to describe the temperature around a hot metal wire. A probabilistic interpretation of the integrated heat kernel will be given in the next section by Theorem \ref{th:bridgecons}.

Our basic approach will be to compare $p^M_{\cdot}(\cdot,N)$ to the function
\begin{equation}\label{eq:qdefn}
q_t(x,N) := (2\pi t)^{-\frac{(\dimm-\dimn)}{2}} \exp\left[-\frac{d^2(x,N)}{2t}\right]
\end{equation}
since if $\mathbb{R}^\dimn$ is viewed as an affine linear subspace of $\mathbb{R}^\dimm$ then
\begin{equation}\label{eq:inthkrn}
p^{\mathbb{R}^\dimm}_T(x,\mathbb{R}^\dimn) = q_T(x,{\mathbb{R}^\dimn}).
\end{equation}

\begin{example}
Viewing the $2$-dimensional hyperbolic space $\mathbb{H}^2$ as an embedded totally geodesic submanifold of $\mathbb{H}^3$, Ndumu \cite{MNNTHESIS} used formula \eqref{eq:hkonhyp3} with $\secur =-1$ to show that
\begin{equation}\label{eq:hkformh2h3}
p^{\mathbb{H}^3}_t(x,\mathbb{H}^2) = q_t(x,\mathbb{H}^2)\frac{e^{-\frac{t}{2}}}{\cosh (d(x,\mathbb{H}^2)))}
\end{equation}
for $x \in \mathbb{H}^3$ and $t>0$.
\end{example}

\section{Brownian Bridges}\label{sec:bbridges}

Suppose in this section that $M$ is a stochastically complete and that $N$ is compact. Fix $T>0$ and $x \in M$ and consider the associated canonical probability space
\begin{equation}
(W(M),\mathcal{B}(W(M)),\mathbb{P}^{x})
\end{equation}
equipped with canonical filtration $\lbrace \mathcal{B}_t(W(M))\rbrace_{0\leq t \leq T}$. Here $W(M)$ is the space of continuous paths defined on $\left[0,T\right]$ taking values in $M$, $\mathscr{B}_t(W(M))$ is the $\sigma$-algebra generated by the coordinate maps upto time $t$ and $\mathbb{P}^{x}$ is Wiener measure, with respect to which the coordinate process $\lbrace X_t : t \in \left[0,T\right]\rbrace$ is a Brownian motion on $M$ starting at $x$.

\subsection{Conditioning on the Distance Function}

Recall that the \textit{normal exponential map} is simply the exponential map restricted to the normal bundle $TN^\perp$. Denote its Jacobian determinant by $\theta_N$. Inequalities for this object were provided in \cite{PAPERONE}. The following theorem shows how the integrated heat kernel appears naturally when a Brownian motion is conditioned to arrive in $N$ at the fixed time $T$.

\begin{theorem}\label{th:bridgecons}
Choose $t \in \left[0,T\right)$ and a bounded $\mathscr{B}_t(W(M))$-measurable random variable $F$. Then
\begin{equation}\label{eq:condondist}
\mathbb{E}^x\left[F(X)\vert X_T \in N \right] = \frac{ \mathbb{E}^{x}\left[ p^M_{T-t}(X_t,N)F(X) \right]}{p^M_T(x,N)}.
\end{equation}
\end{theorem}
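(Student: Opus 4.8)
The plan is to reduce the conditional expectation to an unconditioned one via a limiting/approximation argument, treating the event $\{X_T \in N\}$ as a limit of events of positive probability (small tubular neighbourhoods of $N$). First I would fix $t \in [0,T)$ and a bounded $\mathscr{B}_t(W(M))$-measurable $F(X)$, and for $\varepsilon > 0$ let $N_\varepsilon := \{z \in M : d(z,N) < \varepsilon\}$ be the $\varepsilon$-tube around $N$; since $N$ is compact and embedded, for small $\varepsilon$ this is a regular neighbourhood and $\vol_M(N_\varepsilon) > 0$, so $\mathbb{P}^x(X_T \in N_\varepsilon) > 0$. The natural candidate identity is
\begin{equation}
\mathbb{E}^x\left[F(X)\,\middle|\,X_T\in N\right] = \lim_{\varepsilon \downarrow 0} \mathbb{E}^x\left[F(X)\,\middle|\,X_T\in N_\varepsilon\right],
\end{equation}
which I would either take as the definition of conditioning on the measure-zero event, or justify by a disintegration of Wiener measure along the distance-to-$N$ coordinate near $N$.

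Next I would compute each side of the approximating identity using the Markov property at time $t$ together with the defining relation of the minimal heat kernel as transition density. For the denominator,
\begin{equation}
\mathbb{P}^x(X_T \in N_\varepsilon) = \int_{N_\varepsilon} p^M_T(x,z)\,d\vol_M(z),
\end{equation}
and for the numerator, conditioning on $\mathscr{B}_t(W(M))$ and using the Markov property,
\begin{equation}
\mathbb{E}^x\left[F(X)\mathbf{1}_{\{X_T\in N_\varepsilon\}}\right] = \mathbb{E}^x\left[F(X)\int_{N_\varepsilon} p^M_{T-t}(X_t,z)\,d\vol_M(z)\right].
\end{equation}
Dividing, the ratio of conditional expectations equals the ratio of these two integrals over $N_\varepsilon$. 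Then I would divide numerator and denominator by $\vol_M(N_\varepsilon)$ and pass to the limit $\varepsilon \downarrow 0$. The key analytic input is that, by the coarea formula (or Weyl tube formula / Fermi coordinates), for a continuous integrable function $g$ on $M$,
\begin{equation}
\frac{1}{\vol_M(N_\varepsilon)}\int_{N_\varepsilon} g(z)\,d\vol_M(z) \longrightarrow \frac{1}{\vol_N(N)}\int_N g(y)\,d\vol_N(y)
\end{equation}
as $\varepsilon \downarrow 0$, the Jacobian of the normal exponential map tending to $1$ on $N$ itself. Applying this with $g = p^M_T(x,\cdot)$ in the denominator gives (up to the common factor $\vol_N(N)^{-1}$) exactly $p^M_T(x,N)$ as in \eqref{eq:ihkintro}, and applying it inside the expectation in the numerator — after justifying the interchange of limit and expectation by dominated convergence, using boundedness of $F$ and local boundedness of the heat kernel near the compact set $N$ uniformly for $z$ in a fixed small tube — gives $\mathbb{E}^x[p^M_{T-t}(X_t,N)F(X)]$. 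The common normalisation $\vol_N(N)^{-1}$ cancels, yielding \eqref{eq:condondist}.

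The main obstacle I expect is making the approximation of the conditioning rigorous and controlling the interchange of limit and expectation uniformly: one must verify that $z \mapsto p^M_{T-t}(w,z)$ is bounded on the tube $N_{\varepsilon_0}$ uniformly in $w \in M$ in a way compatible with dominated convergence against the law of $X_t$ — here stochastic completeness of $M$ and compactness of $N$ are what save us, since $\int_{N_{\varepsilon_0}} p^M_{T-t}(w,z)\,d\vol_M(z) \leq \vol_M(N_{\varepsilon_0})\sup p^M_{T-t}(w,\cdot)$ may still be delicate for non-compact $M$, so one likely localises via the Dirichlet heat kernels $p^{D_i}$ on an exhausting sequence, proves the identity with $p^{D_i}$ in place of $p^M$ (where everything is bounded), and then lets $i \uparrow \infty$ using monotone convergence $p^{D_i} \uparrow p^M$ together with the corresponding monotone convergence $p^{D_i}_s(\cdot,N) \uparrow p^M_s(\cdot,N)$. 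A secondary technical point is ensuring the coarea-type averaging limit holds with $g = p^M_T(x,\cdot)$, which is smooth and positive away from issues, so continuity on a neighbourhood of the compact $N$ suffices.
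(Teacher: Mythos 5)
Your proposal is correct and follows essentially the same route as the paper's own proof: approximate the conditioning event by the $\varepsilon$-tube $N_\varepsilon$, apply the Markov property at time $t$ to express numerator and denominator as integrals of the heat kernel over $N_\varepsilon$, convert these to integrals over $N$ of fibre integrals via the smooth coarea formula, and pass to the limit $\varepsilon\downarrow 0$ using continuity together with $\theta_N\vert_N=1$ so that the fibre-volume factor cancels between numerator and denominator. The only cosmetic difference is that you normalise explicitly by $\vol_M(N_\varepsilon)$ whereas the paper keeps the fibre-ball volumes in both numerator and denominator and cancels them directly, identifying the left-hand side as a Radon--Nikodym derivative at the end.
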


\begin{proof}
For sufficiently small $\epsilon >0$ it follows from the definition of conditional expectation, the Markov property, Fubini's theorem and the smooth coarea formula that
\begin{equation}
\begin{split}
&\text{ } \mathbb{E}^x\left[F(X)\vert d(X_T,N) < \epsilon \right]\\
=&\text{ } \frac{\mathbb{E}^{x}\left[\mathbf{1}_{\lbrace d(X_T,N)< \epsilon\rbrace}F(X)\right]}{\mathbb{P}^{x}\lbrace d(X_T,N)< \epsilon\rbrace}\\
=&\text{ } \frac{\mathbb{E}^{x} \left[P_{T-t}\mathbf{1}_{\lbrace d(\cdot,N)< \epsilon\rbrace}(X_t) F(X) \right]}{\mathbb{P}^{x}\lbrace d(X_T,N)< \epsilon\rbrace}\\
=&\text{ } \frac{\int_{{B}_{\epsilon}(N)}\mathbb{E}^{x}\left[ p^M_{T-t}(X_t,y)F(X) \right]\,d\vol_M(y)}{\int_{{B}_{\epsilon}(N)} p^M_T(x,y)\,d\vol_M(y)}\\
=&\text{ } \frac{\int_N \int_{{B}_\epsilon^z(0)}\mathbb{E}^{x}\left[ p^M_{T-t}(X_t,\expN(\xi))F(X) \right]\theta_N(\xi)\,d\xi\, d\vol_N(z)}{\int_N \int_{{B}_\epsilon^z(0)} p^M_T(x,\expN(\xi))\theta_N(\xi)\,d\xi\, d\vol_N(z)}
\end{split}
\end{equation}
where ${B}_\epsilon^z(0)$ denotes the open ball in $T_z N^{\perp}$ of radius $\epsilon$ centred at the origin. Since the volume of these balls is constant and independent of $z$, it follows by the continuity of the above integrands and $\theta_N\vert_N = 1$ that
\begin{equation}
\lim_{\epsilon \downarrow 0} \mathbb{E}^x\left[F(X)\vert d(X_T,N) < \epsilon \right]=\frac{\int_N \mathbb{E}^{x}\left[ p^M_{T-t}(X_t,z)F(X) \right]\,d\vol_N(z)}{\int_N p^M_T(x,z)\,d\vol_N(z)}
\end{equation}
from which the result follows, by the definition of the left-hand side of \eqref{eq:condondist} as a Radon-Nikodym derivative.
\end{proof}

For each $t \in \left[0,T\right)$ it follows, by Theorem \ref{th:bridgecons} and the smooth coarea formula, that conditioning a Brownian motion to be in the \textit{interior} of a tubular neighbourhood of $N$ of radius $\epsilon$ at time $T$ while separately conditioning Brownian motion to belong to the \textit{boundary} of that tubular neighbourhood at time $T$ results in two measures on $\mathcal{B}_t(W(M))$ which converge weakly to the same limit as $\epsilon \downarrow 0$.

\subsection{Existence of the Bridge Measure}\label{ss:subsectgbb}

If for $y \in M$ we define a measure $\mathbb{P}^{x,y;T}$ on $\mathcal{B}(W(M))$ by $\mathbb{P}^{x,y;T}\lbrace A \rbrace=\mathbb{P}^x\lbrace A\vert X_T = y\rbrace$, for $A \in \mathcal{B}(W(M))$, then Theorem \ref{th:bridgecons} implies that $\mathbb{P}^{x,y;T}$ is absolutely continuous with respect to $\mathbb{P}^{x}$ on $\mathcal{B}_t(W(M))$ for any $t\in \left[0,T\right)$ and that the Radon-Nikodym derivative is given by
\begin{equation}
\frac{d\mathbb{P}^{x,y;T}|_{\mathcal{B}_t(W(M))}}{d\mathbb{P}^{x}} = \frac{p^M_{T-t}(X_t,y)}{p^M_T(x,y)}.
\end{equation}
In particular, if $\mathbb{P}^{x,y;T}$ exists as a probability measure on the space of continuous paths starting at $x$ and terminating at $y$ at time $T$ then under $\mathbb{P}^{x,y;T}$ and for $0<t_1<\cdots<t_k<T$ the joint density function of $X_{t_1},\ldots,X_{t_k}$, denoted by $p^M_{t_1,\ldots,t_k}(x,x_1,\ldots,x_k,y)$, is given by
\begin{equation}\label{eq:jointdens}
p^M_{t_1,\ldots,t_k}(x,x_1,\ldots,x_k,y)= \frac{p^M_{t_1}(x,x_1)p^M_{t_2-t_1}(x_1,x_2)\cdots p^M_{T-t_k}(x_k,y)}{p^M_T(x,y)},
\end{equation}
as is well-known. We will prove the existence of $\mathbb{P}^{x,y;T}$ assuming that there exist constants $c,\sigma^2>0$ such that
\begin{equation}\label{eq:assone}
p^M_t(w,z) \leq c t^{-\frac{\dimm}{2}}\exp\left[-\frac{d^2(w,z)}{\sigma^2 t}\right]
\end{equation}
for all $w,z \in M$ and $t \in \left(0,T\right]$ and a constant $\beta\geq 0$ such that
\begin{equation}\label{eq:asstwo}
\theta_w(\xi) \leq \exp\left[\beta(1+\|\xi\|^2)\right]
\end{equation}
for all $w \in M$ and $\xi \in T_wM$. By \cite{MR2388658} and \cite{PAPERONE}, such bounds exist if the Ricci curvature is bounded below by a constant with $\vol_M$ lower regular. In particular, by \cite{MR882426} and \cite{PAPERONE}, such bounds exist if the Ricci curvature is bounded with $M$ having positive injectivity radius, in which case the constants $c$ and $\sigma^2$ can be chosen so that $\sigma^2$ is arbitrarily close to $2$. For the case in which $M$ is compact, a version of the following lemma was proved by Driver in \cite{MR1154540}.

\begin{lemma}\label{lem:concrit}
Assume \eqref{eq:assone} and \eqref{eq:asstwo}, suppose $0\leq s<t\leq T$ and without loss of generality assume $\beta>0$. Then for all $\gamma \in (0,1)$ there exists a constant $C(\dimm,c,\sigma^2,\beta,\gamma,T)>0$ such that for all $p>0$ we have
\begin{equation}
\mathbb{E}^{x,y;T}\left[d^{\mom}(X_s,X_t)\right] \leq \frac{C(\dimm,c,\sigma^2,\beta,\gamma,T)}{p^M_T(x,y)}\frac{\Gamma\left(\frac{\dimm+\mom}{2}\right)}{\Gamma\left(\frac{\dimm}{2}\right)}\left(\frac{\sigma^2(t-s)}{1-\gamma}\right)^{\frac{\mom}{2}}
\end{equation}
so long as $t-s<\gamma(\sigma^2\beta)^{-1}$.
\end{lemma}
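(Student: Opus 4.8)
The plan is to bound the moment $\mathbb{E}^{x,y;T}[d^p(X_s,X_t)]$ by first using the joint density formula \eqref{eq:jointdens} to express it as an integral against heat kernels, then applying the Gaussian upper bound \eqref{eq:assone} to every factor, integrating out all the intermediate variables one at a time, and finally estimating the resulting Gaussian integral in the two relevant variables. Concretely, writing $X_s$ and $X_t$ as the two coordinates we care about, the expectation equals
\begin{equation}
\frac{1}{p^M_T(x,y)}\int_{M\times M} d^p(u,v)\, p^M_s(x,u)\,p^M_{t-s}(u,v)\,p^M_{T-t}(v,y)\,d\vol_M(u)\,d\vol_M(v),
\end{equation}
using the semigroup property to collapse the factors on either side of $u$ and $v$. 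Bounding $p^M_s(x,u)\leq c s^{-\dimm/2}\exp[-d^2(x,u)/(\sigma^2 s)]$ and similarly for the $(v,y)$ factor, and then integrating these two factors over $u$ and $v$ respectively — after using \eqref{eq:asstwo} in geodesic polar coordinates to control the volume element $\theta$ — gives finite constants depending on $\dimm,c,\sigma^2,\beta,T$, provided one can absorb the Gaussian against the exponentially growing Jacobian; this is where the restriction $t-s<\gamma(\sigma^2\beta)^{-1}$ enters, keeping the effective quadratic form positive definite.

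The heart of the estimate is the remaining integral $\int_M d^p(u,v)\,p^M_{t-s}(u,v)\,(\text{bounded factors})\,d\vol_M(v)$. Here I would again invoke \eqref{eq:assone} to replace $p^M_{t-s}(u,v)$ by $c(t-s)^{-\dimm/2}\exp[-d^2(u,v)/(\sigma^2(t-s))]$, pass to geodesic polar coordinates centred at $u$, use \eqref{eq:asstwo} to bound $\theta_u(\xi)\leq \exp[\beta(1+\|\xi\|^2)]=\exp[\beta(1+r^2)]$, and then reduce to a one-dimensional radial integral of the form
\begin{equation}
c(t-s)^{-\dimm/2}e^{\beta}\omega_{\dimm-1}\int_0^\infty r^{p}\exp\left[-\left(\frac{1}{\sigma^2(t-s)}-\beta\right)r^2\right] r^{\dimm-1}\,dr.
\end{equation}
The constraint $t-s<\gamma(\sigma^2\beta)^{-1}$ makes $\frac{1}{\sigma^2(t-s)}-\beta > \frac{1-\gamma}{\sigma^2(t-s)}>0$, so the integral converges, and the standard Gamma-function identity $\int_0^\infty r^{\dimm+p-1}e^{-a r^2}dr = \frac{1}{2}a^{-(\dimm+p)/2}\Gamma(\frac{\dimm+p}{2})$ with $a=\frac{1-\gamma}{\sigma^2(t-s)}$ produces exactly the factors $\Gamma(\frac{\dimm+p}{2})$ and $(\sigma^2(t-s)/(1-\gamma))^{(\dimm+p)/2}$; the $(t-s)^{\dimm/2}$ in the denominator cancels the $(\sigma^2(t-s)/(1-\gamma))^{\dimm/2}$ up to a constant, leaving the claimed power $(t-s)^{p/2}$ and the ratio $\Gamma(\frac{\dimm+p}{2})/\Gamma(\frac{\dimm}{2})$ after the normalising $\Gamma(\frac{\dimm}{2})$ from the solid-angle normalisation is extracted. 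All surviving constants depend only on $\dimm,c,\sigma^2,\beta,\gamma,T$, and crucially not on $p$, so they can be collected into the single constant $C(\dimm,c,\sigma^2,\beta,\gamma,T)$.

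The main obstacle I anticipate is bookkeeping the interaction between the Gaussian decay of the heat kernel bound and the exponential growth $\exp[\beta\|\xi\|^2]$ of the Jacobian bound \eqref{eq:asstwo}: one must be careful that, after integrating out the two ``outer'' variables $u$ and $v$ against $p^M_s(x,\cdot)$ and $p^M_{T-t}(\cdot,y)$, the residual constants do not secretly depend on $p$ or blow up, and that the change to polar coordinates is legitimate even in the presence of a cut locus (here one uses that $\exp_u$ is a diffeomorphism off a measure-zero set and the integrand is non-negative, so the coarea/polar-coordinate computation is an inequality in the safe direction). A secondary point is that the bound on the outer integrals needs $T$-dependence because $s$ and $T-t$ are only bounded by $T$, not by the small quantity $\gamma(\sigma^2\beta)^{-1}$; this is harmless since for those factors we only need finiteness, not smallness, and $s\mapsto s^{-\dimm/2}$ times a Gaussian is integrable for any fixed $s>0$ — though to get a uniform constant one argues slightly differently, e.g. bounding $p^M_s(x,u)\leq$ (something integrable in $u$ uniformly for $s\in(0,T]$) using that the total mass $\int_M p^M_s(x,u)\,d\vol_M(u)\leq 1$ together with \eqref{eq:assone} to control the tail.
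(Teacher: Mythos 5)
Your treatment of the middle factor is exactly right and matches the paper: change variables via $\exp_w$ using \eqref{eq:assone} and \eqref{eq:asstwo}, pass to polar coordinates, and use the constraint $t-s<\gamma(\sigma^2\beta)^{-1}$ to make $\tfrac{1}{\sigma^2(t-s)}-\beta>\tfrac{1-\gamma}{\sigma^2(t-s)}>0$; the Gamma-function identity then produces $\Gamma\bigl(\tfrac{\dimm+p}{2}\bigr)\bigl(\sigma^2(t-s)/(1-\gamma)\bigr)^{p/2}$ after the $(t-s)^{-m/2}$ prefactor cancels the $\dimm/2$ power. That is the paper's Step 1.

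The gap is in the handling of the two outer factors $p^M_s(x,u)$ and $p^M_{T-t}(v,y)$. Your stated plan is to apply the Gaussian bound \eqref{eq:assone} to both, then integrate them out against the Jacobian bound \eqref{eq:asstwo}, claiming the restriction $t-s<\gamma(\sigma^2\beta)^{-1}$ is what keeps the resulting quadratic forms positive definite. That is a misattribution: the constraint controls only the \emph{middle} time increment $t-s$; it says nothing about $s$ or $T-t$, which range freely over $(0,T]$. If $s$ (say) is close to $T$ and $T>(\sigma^2\beta)^{-1}$, the integral $\int_{T_xM}\exp\bigl[-\|\xi\|^2/(\sigma^2 s)\bigr]\exp\bigl[\beta(1+\|\xi\|^2)\bigr]\,d\xi$ diverges, so the outer Gaussian-absorption argument fails outright. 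You spot the difficulty yourself and suggest falling back on the total-mass bound $\int_M p^M_s(x,u)\,d\vol_M(u)\le 1$, but that alone does not rescue the argument: the weight $d^p(u,v)$ couples $u$ to $v$, so you cannot integrate out $v$ against $p^M_{T-t}(\cdot,y)$ without first removing the $v$-dependence of $p^M_{T-t}$. What is missing is the paper's case split: since $0\le s<t\le T$, at least one of $T-t>T/3$ or $s>T/3$ holds. In the first case, bound $p^M_{T-t}(z,y)\le c(T/3)^{-\dimm/2}$ \emph{uniformly}, integrate $z$ out via your middle-factor estimate (which is uniform in $w$), and finish with $\int_M p^M_s(x,w)\,d\vol_M(w)\le1$; in the second, argue symmetrically. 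Without that split, the argument does not produce a constant depending only on $(\dimm,c,\sigma^2,\beta,\gamma,T)$, and the endpoint cases $s=0$ and $t=T$ also need a word.
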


\begin{proof}
First assume $0< s<t<2T/3$. If $a>0$ then
\begin{equation}
\int_0^\infty \exp\left[-ar^2\right]r^{\dimm-1}ds = \frac{\Gamma(\dimm/2)}{2a^{\frac{\dimm}{2}}}
\end{equation}
and so for $w \in M$ we have
\begingroup
\allowdisplaybreaks
\begin{align}\nonumber
& \text{ } \int_M p^M_{t-s}(w,z)d^{\mom}(w,z)\,d\vol_M(z)\\ \nonumber
\leq & \text{ } c (t-s)^{-\frac{\dimm}{2}}\int_M \exp\left[-\frac{d^2(w,z)}{\sigma^2(t-s)}\right]d^\mom(w,z)\,d\vol_M(z)\\ \nonumber
\leq & \text{ } c e^\beta (t-s)^{-\frac{\dimm}{2}}\int_{T_wM} \| \xi\|^{\mom}\exp\left[\left(\beta-\frac{1}{\sigma^2(t-s)}\right)\| \xi\|^2\right]dv\\ \nonumber
=& \text{ } c e^\beta \frac{\dimm\pi^{\frac{\dimm}{2}}(t-s)^{-\frac{\dimm}{2}}}{\Gamma\left(\frac{\dimm}{2}+1\right)}\int_0^\infty r^{\mom+\dimm-1}\exp\left[\left(\beta-\frac{1}{\sigma^2(t-s)}\right)r^2\right]dr\\ \nonumber
=& \text{ }c e^\beta\frac{ \pi^{\frac{\dimm}{2}}\Gamma\left(\frac{\dimm+\mom}{2}\right)(t-s)^{-\frac{\dimm}{2}}}{\Gamma\left(\frac{\dimm}{2}\right)}\left(\frac{1-\beta\sigma^2(t-s)}{\sigma^2(t-s)}\right)^{-\frac{(\dimm+\mom)}{2}}\\ \nonumber
\leq & \text{ } c e^\beta\frac{\Gamma\left(\frac{\dimm+\mom}{2}\right)}{\Gamma\left(\frac{\dimm}{2}\right)}\left(\frac{\sigma^2\pi}{1-\gamma}\right)^{\frac{\dimm}{2}}\left(\frac{\sigma^2(t-s)}{1-\gamma}\right)^{\frac{\mom}{2}}.\nonumber
\end{align}%
\endgroup
Thus there exists a constant $C_0(\dimm,c,\sigma^2,\beta,\gamma)>0$ such that
\begin{equation}
\int_M p^M_{t-s}(w,z)d^{\mom}(w,z)d\vol_M(z) \leq C_0(\dimm,c,\sigma^2,\beta,\gamma) \frac{\Gamma\left(\frac{\dimm+\mom}{2}\right)}{\Gamma\left(\frac{\dimm}{2}\right)}\left(\frac{\sigma^2(t-s)}{1-\gamma}\right)^{\frac{\mom}{2}} \nonumber
\end{equation}
for all $\mom>0$, $w \in M$ and $s,t$ satisfying $t-s<\gamma(\sigma^2\beta)^{-1}$. For such $s,t$ we see that
\begin{equation}
\begin{split}
 &\text{ }\mathbb{E}^{x,y;T}\left[d^{\mom}(X_s,X_t)\right]\\
= &\text{ }\int_M \int_M \frac{p^M_s(x,w)p^M_{t-s}(w,z)d^{\mom}(w,z) p^M_{T-t}(z,y)}{p^M_T(x,y)} d\vol_M(w)d\vol_M(z) \\
\leq &\text{ }(T/3)^{-\dimm/2}\frac{ C_1(\dimm,c,\sigma^2,\beta,\gamma)}{p^M_T(x,y)}  \frac{\Gamma\left(\frac{\dimm+\mom}{2}\right)}{\Gamma\left(\frac{\dimm}{2}\right)}\left(\frac{\sigma^2(t-s)}{1-\gamma}\right)^{\frac{\mom}{2}}. \nonumber
\end{split}
\end{equation}
The same result is obtained for $T/3<s<t<T$. The cases $s=0$ or $t=T$ can be treated similarly.
\end{proof}

If $F$ is a bounded $\mathscr{B}_t(W(M))$-measurable function on $W(M)$ for some $t \in \left[0,T\right)$ then it follows from Theorem \ref{th:bridgecons} that
\begin{equation}\label{eq:gentobas}
\mathbb{E}^{x}\left[F(X)\vert X_T \in N \right] = \frac{\int_N p^M_T(x,y)\mathbb{E}^{x,y;T}\left[ F(X) \right]\,d\vol_N(y)}{p^M_T(x,N)}
\end{equation}
which implies, by Lemma \ref{lem:concrit}, that for all $p\geq2$ there exists $\epsilon,C_\epsilon>0$ such that
\begin{equation}
\mathbb{E}^{x}\left[d^{\mom}(X_s,X_t)\vert X_T \in N \right]  \leq C_\epsilon(t-s)^{\frac{\mom}{2}}\nonumber
\end{equation}
for all $0\leq s<t\leq T$ with $t-s<\epsilon$. It follows from Kolmogorov's continuity theorem, by covering the interval $\left[0,T\right]$ with finitely many closed intervals each of length less that $\epsilon$, that there exists a probability measure $\mathbb{P}^{x;N,T}$ on the bridge space
\begin{equation}
L_{x;N,T}(M) := \lbrace \omega \in W(M):X_0(\omega)=x, \text{ }X_T(\omega) \in N\rbrace \nonumber
\end{equation}
which satisfies $\mathbb{P}^{x;N,T}\lbrace A \rbrace= \mathbb{P}^{x}\lbrace A\vert X_T \in N\rbrace$ for $A \in \mathscr{B}(W(M))$ and whose finite-dimensional distributions can be deduced from equations \eqref{eq:jointdens} and \eqref{eq:gentobas}. In particular, if $\mathbb{P}^{x;N,T}_{X_T}$ denotes the law of the random variable $X_T$ under the measure $\mathbb{P}^{x;N,T}$ then
\begin{equation}
\mathbb{P}^{x;N,T}_{X_T} = \frac{p^M_T(x,\cdot)}{p^M_T(x,N)}\vol_N.
\end{equation}
For example, if $M=\mathbb{R}^\dimm$ with $N$ the unit $(\dimm-1)$-sphere and $x=0$ then $\mathbb{P}^{x;N,T}_{X_T}$ is given by the uniform measure on $N$. In contrast, if $M=\mathbb{R}^\dimm$ with $N$ an $\dimn$-dimensional subspace and $x=0$ then $\mathbb{P}^{x;N,T}_{X_T}$ is given by the heat kernel measure on $N$. Note that the measure $\mathbb{P}^{x;N,T}$ clearly exists in the latter case, even though $N$ is non-compact.

Using Lemma \ref{lem:concrit} we can prove the following concentration inequality for tubular neighbourhoods.

\begin{proposition}\label{prop:concencondthm}
Assuming \eqref{eq:assone} and \eqref{eq:asstwo}, for all $\gamma \in (0,1)$ there exists $\epsilon>0$ such that
\begin{equation}
\lim_{r\uparrow \infty} \frac{1}{r^2}\log \mathbb{P}^{x;N,T}\lbrace X_t \not\in B_r(N)\rbrace \leq -\frac{1-\gamma}{\sigma^2(T-t)}
\end{equation}
for all $t\in \left(T-\epsilon,T\right]$.
\end{proposition}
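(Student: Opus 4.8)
The plan is to disintegrate $\mathbb{P}^{x;N,T}$ over its terminal point by means of \eqref{eq:gentobas}, to estimate the resulting point-to-point bridge probabilities by Chebyshev's inequality and the moment bound of Lemma \ref{lem:concrit}, and finally to optimise over the moment exponent. Assume, as in Lemma \ref{lem:concrit}, that $\beta>0$, and set $\epsilon:=\gamma(\sigma^2\beta)^{-1}$; fix $t\in(T-\epsilon,T)$, the case $t=T$ being immediate since then $X_T\in N$ almost surely under $\mathbb{P}^{x;N,T}$. The event $\{X_t\notin B_r(N)\}$ is $\mathscr{B}_t(W(M))$-measurable, so applying \eqref{eq:gentobas} with $F=\mathbf{1}_{\{X_t\notin B_r(N)\}}$ gives
\begin{equation}
\mathbb{P}^{x;N,T}\{X_t\notin B_r(N)\}=\frac{1}{p^M_T(x,N)}\int_N p^M_T(x,y)\,\mathbb{P}^{x,y;T}\{d(X_t,N)\geq r\}\,d\vol_N(y).
\end{equation}

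Next I would bound the inner probability uniformly in $y\in N$. Since $y\in N$ one has $d(X_t,N)\leq d(X_t,y)$ pointwise, and under $\mathbb{P}^{x,y;T}$ the endpoint satisfies $X_T=y$; hence by Chebyshev's inequality and Lemma \ref{lem:concrit}, applied with the pair $(t,T)$ in place of $(s,t)$ — admissible because $T-t<\epsilon$ — we obtain, for every $\mom>0$,
\begin{equation}
\mathbb{P}^{x,y;T}\{d(X_t,N)\geq r\}\leq r^{-\mom}\,\mathbb{E}^{x,y;T}\left[d^{\mom}(X_t,X_T)\right]\leq \frac{r^{-\mom}\,C}{p^M_T(x,y)}\,\frac{\Gamma\!\left(\frac{\dimm+\mom}{2}\right)}{\Gamma\!\left(\frac{\dimm}{2}\right)}\left(\frac{\sigma^2(T-t)}{1-\gamma}\right)^{\frac{\mom}{2}},
\end{equation}
where $C=C(\dimm,c,\sigma^2,\beta,\gamma,T)$ depends on neither $\mom$ nor $r$. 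The key observation is that the factor $p^M_T(x,y)^{-1}$ exactly cancels the weight $p^M_T(x,y)$ in the integral over $N$, so that integrating and using the compactness of $N$ yields
\begin{equation}
\mathbb{P}^{x;N,T}\{X_t\notin B_r(N)\}\leq \frac{C\,\vol_N(N)}{p^M_T(x,N)}\,\frac{\Gamma\!\left(\frac{\dimm+\mom}{2}\right)}{\Gamma\!\left(\frac{\dimm}{2}\right)}\left(\frac{\sigma^2(T-t)}{(1-\gamma)r^2}\right)^{\frac{\mom}{2}}
\end{equation}
for all $\mom>0$ and all $r>0$.

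It then remains to optimise this bound over $\mom$. Writing $b:=\sigma^2(T-t)/(1-\gamma)$ and recalling that the prefactor is independent of $\mom$, Stirling's formula $\log\Gamma\!\left(\tfrac{\dimm+\mom}{2}\right)=\tfrac{\mom}{2}\log\tfrac{\mom}{2}-\tfrac{\mom}{2}+O(\log \mom)$ as $\mom\to\infty$ shows that the logarithm of the right-hand side equals $\tfrac{\mom}{2}\log\tfrac{\mom b}{2er^2}+O(\log \mom)+O(1)$; this is minimised to leading order by the choice $\mom=\mom(r):=2r^2/b$, at which point it equals $-r^2/b+O(\log r)$. Substituting $\mom=\mom(r)$, dividing by $r^2$ and letting $r\uparrow\infty$ then gives
\begin{equation}
\lim_{r\uparrow\infty}\frac{1}{r^2}\log\mathbb{P}^{x;N,T}\{X_t\notin B_r(N)\}\leq -\frac{1}{b}=-\frac{1-\gamma}{\sigma^2(T-t)},
\end{equation}
which is the assertion, and the proof extends to $t\in(T-\epsilon,T]$ by the remark on $t=T$.

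The one genuinely delicate step is this final optimisation: one must let the moment order $\mom$ grow quadratically in $r$, which is legitimate precisely because $C$, $\vol_N(N)$ and $p^M_T(x,N)$ are all independent of $\mom$, and one must control the ratio of Gamma functions via Stirling; everything preceding it is a routine application of \eqref{eq:gentobas} and Lemma \ref{lem:concrit}. (A slightly stronger bound, with $1-\gamma$ replaced by $1$ and valid for every $t\in[0,T)$, could alternatively be obtained by estimating $p^M_{T-t}(X_t,N)=\int_N p^M_{T-t}(X_t,y)\,d\vol_N(y)$ directly from \eqref{eq:assone} together with $d(X_t,y)\geq d(X_t,N)$ and the finiteness of $\vol_N(N)$, and then invoking Theorem \ref{th:bridgecons}; but the route through Lemma \ref{lem:concrit} is the one indicated by the surrounding discussion.)
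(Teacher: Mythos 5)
Your argument is correct, and it rests on the same two pillars as the paper's proof — the disintegration formula \eqref{eq:gentobas} and the moment bound of Lemma \ref{lem:concrit}, applied with the pair $(t,T)$ — but the final concentration step is carried out differently. The paper applies Lemma \ref{lem:concrit} to even integer moments and sums the resulting series to obtain an exponential (``chi-squared''-type) moment bound $\mathbb{E}^{x;N,T}[e^{\frac{\theta}{2}d^2(X_t,N)}]\leq C\bigl(1-\tfrac{\theta\sigma^2(T-t)}{2(1-\gamma)}\bigr)^{-m/2}$, then uses Markov's inequality and optimises over $\theta$ (with a secondary parameter $\delta\uparrow 1$ to keep $\theta$ in the admissible range). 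You instead keep the polynomial moment, apply Chebyshev's inequality directly, and optimise over the moment order $p$ via Stirling with the choice $p\sim 2r^2(1-\gamma)/(\sigma^2(T-t))$. These are equivalent Chernoff-type computations yielding the same Gaussian rate; your version avoids the convergence-radius bookkeeping for the exponential series at the cost of a Stirling estimate, and the unconstrained optimisation over $p$ removes the need for the extra parameter $\delta$. Both approaches rely crucially on the observation — which you make explicitly — that the factor $p^M_T(x,y)^{-1}$ from Lemma \ref{lem:concrit} cancels against the weight $p^M_T(x,y)$ in \eqref{eq:gentobas}, leaving only the bounded quantity $\vol_N(N)/p^M_T(x,N)$, which is harmless after dividing by $r^2$.

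One cosmetic remark: your appeal to Lemma \ref{lem:concrit} with $(s,t)=(t,T)$ uses the lemma at the endpoint $t=T$, which the paper's proof of that lemma covers only with the brief sentence that ``the cases $s=0$ or $t=T$ can be treated similarly''; this is fine but worth noting. Also, strictly speaking both your argument and the paper's establish a bound on the $\limsup$; writing $\lim$ as in the statement is a harmless abuse shared with the original.
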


\begin{proof}
Without loss of generality, assume $\beta >0$. Since $d(\cdot,N)= \inf_{y \in N}d(\cdot,y)$ it follows that for each $y \in N$ we have $\mathbb{E}^{x,y;T}\left[d^p(X_t,N)\right] \leq \mathbb{E}^{x,y;T}\left[d^p(X_t,y)\right]$. Therefore, by applying \eqref{eq:gentobas} with $F(X)=d^p(X_t,N)$, we see by Lemma \ref{lem:concrit} that for all $\gamma \in (0,1)$ there exists $C(\dimm,c,\sigma^2,\beta,\gamma,N,T)>0$ such that
\begin{equation}
\mathbb{E}^{x;N,T}[d^{\mom}(X_t,N)]\leq  \frac{C(\dimm,c,\sigma^2,\beta,\gamma,N,T)}{p^M_T(x,N)}\frac{\Gamma\left(\frac{\dimm+\mom}{2}\right)}{\Gamma\left(\frac{\dimm}{2}\right)}\left(\frac{\sigma^2(T-t)}{1-\gamma}\right)^{\frac{\mom}{2}}\nonumber
\end{equation}
for all $0\leq t\leq T$ with $T-t<\gamma(\sigma^2\beta)^{-1}$. For such $t$, choosing $\theta>0$, applying this bound to the case where $\mom$ is an even integer and summing yields
\begin{equation}
\mathbb{E}^{x;N,T}\left[e^{\frac{\theta}{2} d^2(X_t,N)}\right] \leq C(\dimm,c,\sigma^2,\beta,\gamma,x,N,T) \left(1- \frac{\theta\sigma^2(T-t)}{2(1-\gamma)}\right)^{-\frac{\dimm}{2}} \nonumber
\end{equation}
so long as $t>T-2(1-\gamma)(\theta\sigma^2)^{-1}$. Under these conditions on $t$, it follows from Markov's inequality that for all $r>0$ there is the estimate
\begin{equation}
\mathbb{P}^{x;N,T}\lbrace X_t \not\in B_r(N)\rbrace \leq C(\dimm,\delta,\gamma,c_R,x,N,T)\left(1- \frac{\theta\sigma^2(T-t)}{2(1-\gamma)}\right)^{-\frac{\dimm}{2}}e^{-\frac{\theta r^2}{2}}.\nonumber
\end{equation}
Fixing $\delta \in \left[0,1\right)$ and choosing $\theta = 2\delta(1-\gamma)(\sigma^2(T-t))^{-1}$ this yields
\begin{equation}
\lim_{r\uparrow \infty} \frac{1}{r^2}\log \mathbb{P}^{x;N,T}\lbrace X_t \not\in B_r(N)\rbrace \leq -\frac{\delta(1-\gamma)}{\sigma^2(T-t)}\nonumber
\end{equation}
from which the result follows since $\delta$ can be chosen arbitrarily close to $1$.
\end{proof}

\subsection{Semimartingale Property}\label{ss:smprop}

It follows from \eqref{eq:gentobas} and Girsanov's theorem that under the measure $\mathbb{P}^{x;N,T}$ the coordinate process on $W(M)$ is a diffusion on the half-open time interval $\left[0,T\right)$ starting at $x$ with time-dependent infinitesimal generator
\begin{equation}
\frac{1}{2}\triangle + \nabla \log p^M_{T-t}(\cdot,N)
\end{equation}
for $t\in\left[0,T\right)$. To show that it is a semimartingale under this measure over the closed time interval $\left[0,T\right]$ we require a suitable estimate on the logarithmic derivative. This estimate will be deduced from the results obtained in Sections \ref{sec:fbridges}, \ref{sec:Sechkform} and \ref{sec:lbar}. It is given by Theorem \ref{th:derivativeestiamte} in Section \ref{sec:deriv} and the semimartingale property is given by Corollary \ref{cor:hesscor}.

\section{Fermi Bridges}\label{sec:fbridges}

In this section we introduce a bridge process defined in terms of the distance function $r_N(\cdot):=d(\cdot,N)$. Recall that the cut locus of $N$, denoted by $\Cut(N)$, is given by the closure of the set of points at which $r_N^2$ fails to be differentiable. It is a closed set of $\vol_M$-measure zero which is, upto a set of Hausdorff dimension at most $\dimm-2$, an at most countable union $\mathring{C}(N)$ of open subsets of hypersurfaces. These facts follow from the work of Angulo Ardoy and Guijarro \cite{MR2951748} and Mantegazza and Mennucci \cite{MR1941909} who used Hamilton-Jacobi equations and the theory of viscosity solutions. The vector field $\frac{\partial}{\partial r_N}$ will denote differentiation in the radial direction, defined off the union of $N$ and $\Cut(N)$ to be the gradient of $r_N$ and set equal to zero elsewhere. For $T>0$ fixed with $q_{\cdot}\left(\cdot,N\right)$ defined by equation \eqref{eq:qdefn}, the time-dependent vector field
\begin{equation}
\mathbf{1}_{M\setminus \Cut(M)} \nabla \log q_{T-t}(\cdot,N) = - \frac{r_N }{T-t}\frac{\partial}{\partial r_N} \nonumber
\end{equation}
where $t\in\left[0,T\right)$ is smooth away from the cut locus but generally not continuous on it. One imagines the deterministic flow associated to this vector field as being one for which $\Cut(N)$ is a source and for which $N$ is a sink. The strength of the flow increases dramatically as the terminal time $T$ is approached, while the vector field vanishes on $N$. A diffusion on $M$ starting at $x$ with time-dependent infinitesimal generator
\begin{equation}
\frac{1}{2}\triangle - \frac{r_N }{T-t}\frac{\partial}{\partial r_N}
\end{equation}
will be called a \textit{\brname bridge between $x$ and $N$ in time $T$}. Such processes will always be defined upto a predictable stopping time less than or equal to $T$. We use the name \textit{\brname bridge} since the time-dependent part of the drift acts in a direction normal to $N$, which would be the radial part of a system of polar Fermi coordinates, and since there are conditions under which this process arrives at $N$ at time $T$ almost surely. Note that if $M=\mathbb{R}^\dimm$ with $N$ a point then the above definition reduces to that of a standard Brownian bridge.

\subsection{Radial Part}\label{ss:firstsec}

Suppose that $\hat{X}(x)$ is a \brname bridge between $x$ and $N$ in time $T$, defined upto the minimum of $T$ and its explosion time. Suppose also that $D$ is a regular domain in $M$ and denote by $\hat{\tau}_D$ the first exit time of $\hat{X}(x)$ from $D$. Since $\check{C}(N)$ is polar for $\hat{X}(x)$ and since the martingale part of an antidevelopment of $\hat{X}(x)$ is a standard Brownian motion, the It\^{o} formula of Barden and Le \cite[Thoerem~1]{MR1314177} implies that there exist continuous adapted non-decreasing and non-negative processes $L^{N}(\hat{X}(x))$ and $L^{\mathring{C}(N)}(\hat{X}(x))$ whose associated random measures are singular with respect to Lebesgue measure and supported when $\hat{X}(x)$ takes values in $N$ and $\mathring{C}(N)$, respectively, such that
\begin{equation}\label{eq:rNFormulaforbridge}
\begin{split}
{r_N}(\hat{X}_{t \wedge \hat{\tau}_D}(x)) &= r_N(x) + \beta_{t \wedge \hat{\tau}_D} +\frac{1}{2}\int_0^{t \wedge \hat{\tau}_D} \triangle r_N(\hat{X}_s(x))\,ds\\[3.0mm]
&\text{ }\quad\quad -\int_0^{t \wedge \hat{\tau}_D} \frac{r_N(\hat{X}_s(x))}{T-s}\,ds-\mathbb{L}^{\Cut(N)}_{t \wedge \hat{\tau}_D}(\hat{X}(x)) + L^N_{t \wedge \hat{\tau}_D}(\hat{X}(x))
\end{split}
\end{equation}
for all $t\in \left[0,T\right)$, almost surely, where $\beta$ is a standard one-dimensional Brownian motion and
\begin{equation}\label{eq:mathbbLdefnforbr}
d\mathbb{L}^{\Cut(N)}(\hat{X}(x)) :=-\frac{1}{2}\left(D^+- D^-\right)_{\hat{X}(x)} {r_N} (\mathbf{n}) \, dL^{\mathring{C}(N)}(\hat{X}(x)).
\end{equation}
Here $\mathbf{n}$ is any unit normal vector field on $\mathring{C}(N)$ and the G\^{a}teaux derivatives $D^\pm {r_N}$ are defined for $z \in \mathring{C}(N)$ and $v \in T_zM$ by
\begin{equation}
D^+_z {r_N}(v) := \lim_{\epsilon \downarrow 0} \frac{1}{\epsilon} \left(f(\exp_z (\epsilon v)) - {r_N}(z)\right) \nonumber
\end{equation}
and $D^-_z {r_N}(v) := -D^+_z {r_N}(-v)$. The processes $L^N(\hat{X}(x))$ and $L^{\mathring{C}(N)}(\hat{X}(x))$ are, roughly speaking, given by the local times at zero of $d(\hat{X}(x),N)$ and $d(\hat{X}(x),\mathring{C}(N))$, respectively, whenever the latter make sense. Using this formula we can estimate the radial moments of the \brname bridge, restricting our attention to the domain $D$ and doing so under the assumption that there exist constants $\nu \geq1$ and $\lambda \geq 0$ such that
\begin{equation}\label{eq:triineq}
\frac{1}{2}\triangle r_N^2 \leq \nu + \lambda r_N^2
\end{equation}
on $D \setminus \Cut(N)$.

\begin{theorem}\label{th:secradmomthmbr}
Let $\nu \geq1$ and $\lambda \geq 0$ be any constants such that inequality \eqref{eq:triineq} holds on $D\setminus \Cut(N)$. Then we have
\begin{equation}\label{eq:secondradmomforbridge}
\mathbb{E} \left[\mathbf{1}_{\lbrace t<\hat{\tau}_D\rbrace} r_N^2(\hat{X}_t(x))\right] \leq \left(r_N^2 (x) \left( \frac{T-t}{T}\right) + \nu t \right) \left( \frac{T-t}{T}\right) e^{\lambda t}
\end{equation}
for all $t\in \left[0,T\right)$.
\end{theorem}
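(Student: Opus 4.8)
The plan is to apply the It\^{o} formula \eqref{eq:rNFormulaforbridge} to the process $r_N^2(\hat{X}(x))$ rather than to $r_N$ directly, or equivalently to combine \eqref{eq:rNFormulaforbridge} with It\^{o}'s formula for the square; the point of working with $r_N^2$ is that it is genuinely $C^2$ away from $\Cut(N)$ and vanishes on $N$, so the local time terms are easier to control. Writing $\phi(t):= \mathbb{E}\left[\mathbf{1}_{\lbrace t<\hat{\tau}_D\rbrace} r_N^2(\hat{X}_t(x))\right]$, I would first derive, using \eqref{eq:triineq} and the radial drift $-\frac{r_N}{T-s}\frac{\partial}{\partial r_N}$ which contributes $-\frac{2r_N^2}{T-s}$ to the generator applied to $r_N^2$, a differential inequality of the form
\begin{equation}
\phi'(t) \leq \nu + \lambda\,\phi(t) - \frac{2}{T-t}\phi(t),\nonumber
\end{equation}
valid in an integrated (hence distributional) sense on $\left[0,T\right)$. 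The cut-locus local time term $-\mathbb{L}^{\Cut(N)}$ enters with a favourable sign because $r_N^2$ is superharmonic across $\mathring{C}(N)$ in the relevant direction, so it can simply be dropped; the term $L^N$ is supported where $r_N=0$, and applying It\^{o} to $r_N^2 = (r_N)^2$ shows its contribution vanishes since it is multiplied by $r_N$. The martingale part, being a true martingale after localisation by $\hat{\tau}_D$, has zero expectation.

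Next I would solve the linear differential inequality. The integrating factor for $\frac{d}{dt} + \left(\frac{2}{T-t}-\lambda\right)$ is $\left(\frac{T}{T-t}\right)^{2} e^{-\lambda t}$ — note $\exp\left[\int_0^t \frac{2}{T-s}ds\right] = \left(\frac{T}{T-t}\right)^2$ — so Gr\"{o}nwall's inequality gives
\begin{equation}
\left(\frac{T}{T-t}\right)^{2} e^{-\lambda t}\,\phi(t) \leq \phi(0) + \nu\int_0^t \left(\frac{T}{T-s}\right)^{2} e^{-\lambda s}\,ds \leq r_N^2(x) + \nu\int_0^t \left(\frac{T}{T-s}\right)^{2} ds,\nonumber
\end{equation}
where in the last step I bound $e^{-\lambda s}\leq 1$ inside the integral; keeping the $e^{-\lambda t}$ on the left and moving it over produces the $e^{\lambda t}$ factor in \eqref{eq:secondradmomforbridge}. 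The remaining integral is elementary: $\int_0^t \left(\frac{T}{T-s}\right)^2 ds = T^2\left(\frac{1}{T-t}-\frac{1}{T}\right) = \frac{T t}{T-t}$. Substituting and multiplying through by $\left(\frac{T-t}{T}\right)^2 e^{\lambda t}$ yields
\begin{equation}
\phi(t) \leq \left(r_N^2(x)\left(\frac{T-t}{T}\right)^2 + \nu t\left(\frac{T-t}{T}\right)\right)e^{\lambda t},\nonumber
\end{equation}
which rearranges to exactly the claimed bound $\left(r_N^2(x)\left(\frac{T-t}{T}\right) + \nu t\right)\left(\frac{T-t}{T}\right)e^{\lambda t}$.

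I expect the main obstacle to be the rigorous justification of the differential inequality for $\phi$, i.e. the interchange of expectation and differentiation and, more delicately, the sign and integrability of the cut-locus local time contribution $\mathbb{L}^{\Cut(N)}$. One must check that $r_N^2$ has a concave kink (from the side) across $\mathring{C}(N)$ so that $(D^+ - D^-)r_N^2 \leq 0$ there, making $-\mathbb{L}^{\Cut(N)}$ nonpositive and hence safely discardable; this should follow from the semiconcavity of $r_N$ off $N$ together with $r_N>0$ on $\mathring{C}(N)$, using the structure results of Angulo Ardoy--Guijarro and Mantegazza--Mennucci cited above and the fact that $d\mathbb{L}^{\Cut(N)}$ in \eqref{eq:mathbbLdefnforbr} is built from $-(D^+-D^-)r_N(\mathbf{n})$. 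A secondary technical point is that $\hat\tau_D$ localises everything so that the stochastic integral $\int_0^{t\wedge\hat\tau_D} 2 r_N(\hat X_s(x))\,d\beta_s$ is a genuine martingale; since $D$ is a regular domain, $r_N$ is bounded on $\bar D$, so this is immediate, and the indicator $\mathbf{1}_{\lbrace t<\hat\tau_D\rbrace}$ together with $r_N^2 \geq 0$ lets me pass from the stopped process to the expression in \eqref{eq:secondradmomforbridge} by dropping the nonnegative contribution on $\lbrace t\geq \hat\tau_D\rbrace$ on the left-hand side. The differentiation in $t$ can be avoided entirely by working with the integrated form of \eqref{eq:rNFormulaforbridge} and applying Gr\"{o}nwall's lemma in integral form, which I would do to keep the argument clean.
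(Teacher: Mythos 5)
Your proposal is correct and takes essentially the same route as the paper: apply It\^{o}'s formula to $r_N^2(\hat{X})$ using \eqref{eq:rNFormulaforbridge}, observe that the drift $-\frac{r_N}{T-s}\frac{\partial}{\partial r_N}$ contributes $-\frac{2r_N^2}{T-s}$, bound $\frac{1}{2}\triangle r_N^2$ by \eqref{eq:triineq}, drop the nonpositive local-time term and the $L^N$ term (killed by the factor $r_N$), and solve the resulting ODE inequality with the integrating factor $\left(\frac{T}{T-t}\right)^2 e^{-\lambda t}$ via Gr\"{o}nwall. The paper states the Gr\"{o}nwall step and the use of $\lambda\geq 0$ more tersely, but the argument is the same; your extra remarks about the sign of $d\mathbb{L}^{\Cut(N)}$ and the vanishing of the $L^N$ contribution are accurate and fill in details the paper leaves implicit.
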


\begin{proof}
Define the function $\hat{f}_{x,2}:\left[0,T\right)\rightarrow \mathbb{R}$ by
\begin{equation}
\hat{f}_{x,2}(t) := \mathbb{E} [\mathbf{1}_{\lbrace t<\hat{\tau}_D\rbrace} r^2_N (\hat{X}_t(x))] \nonumber
\end{equation}
for $t \in \left[0,T\right)$. By It\^{o}'s formula and formula \eqref{eq:rNFormulaforbridge} we deduce the differential inequality
\begin{equation}
\begin{cases}
\hat{f}'_{x,2} (t) \leq \nu  + \left(\lambda-\frac{2}{T-t}\right) \hat{f}_{x,2} (t)\\
\hat{f}_{x,2}(0) = r_N^2(x) \nonumber
\end{cases}
\end{equation}
for all $t \in \left[0,T\right)$. Applying Gronwall's inequality to it yields
\begin{equation}
\begin{split}
\hat{f}_{x,2} (t) &\leq \left(r_N^2(x)+\nu \int_0^t \left(\frac{T}{T-s}\right)^2 e^{-s\lambda} ds\right)\left(\frac{T-t}{t}\right)^2 e^{\lambda t}\\
&\leq \left(r_N^2(x)+\nu t \left(\frac{t}{T-t}\right)\right)\left(\frac{T-t}{t}\right)^2 e^{\lambda t}\nonumber
\end{split}
\end{equation}
where we used the assumption $\lambda \geq 0$ for the second inequality.
\end{proof}

Note that for $M=\mathbb{R}^\dimm$ with $N$ a linear subspace, with $\nu=\dimm-\dimn$ and $\lambda = 0$, one can set $D=M$ and inequality \eqref{eq:secondradmomforbridge} holds as an \textit{equality}. Also, Jensen's inequality implies the following estimate on the first radial moment.

\begin{corollary}\label{cor:frmebr}
Let $\nu \geq1$ and $\lambda \geq 0$ be any constants such that inequality \eqref{eq:triineq} holds on $D\setminus \Cut(N)$. Then we have
\begin{equation}
\mathbb{E} [\mathbf{1}_{\lbrace t<\hat{\tau}_D\rbrace}r_N (\hat{X}_t(x))] \leq \left(\left(r_N^2 (x) \left( \frac{T-t}{T}\right) + \nu t\right) \left( \frac{T-t}{T}\right)\right)^{\frac{1}{2}} e^{\frac{\lambda t}{2}}
\end{equation}
for all $t\in \left[0,T\right)$.
\end{corollary}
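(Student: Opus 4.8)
The plan is to derive Corollary \ref{cor:frmebr} directly from Theorem \ref{th:secradmomthmbr} by applying Jensen's inequality to the concave function $\phi(u)=\sqrt{u}$ on $[0,\infty)$. The only subtlety is that the expectation defining the radial moments is taken against a subprobability measure, since the indicator $\mathbf{1}_{\lbrace t<\hat{\tau}_D\rbrace}$ has total mass $\mathbb{P}\lbrace t<\hat{\tau}_D\rbrace\leq 1$; so I would first normalise, then apply Jensen, then multiply back.

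First I would fix $t\in\left[0,T\right)$ and set $P_t:=\mathbb{P}\lbrace t<\hat{\tau}_D\rbrace$. If $P_t=0$ both sides of the claimed inequality are trivially satisfied, so assume $P_t>0$. Then $\mathbb{E}[\mathbf{1}_{\lbrace t<\hat{\tau}_D\rbrace}\,(\cdot)]/P_t$ is a genuine probability expectation, and by the conditional Jensen inequality applied to $u\mapsto\sqrt{u}$,
\begin{equation}
\frac{\mathbb{E}\left[\mathbf{1}_{\lbrace t<\hat{\tau}_D\rbrace}\,r_N(\hat{X}_t(x))\right]}{P_t}
\leq \left(\frac{\mathbb{E}\left[\mathbf{1}_{\lbrace t<\hat{\tau}_D\rbrace}\,r_N^2(\hat{X}_t(x))\right]}{P_t}\right)^{\frac{1}{2}}.
\end{equation}
Multiplying both sides by $P_t$ and using $P_t\leq P_t^{1/2}$ (valid since $0\leq P_t\leq 1$), together with the bound \eqref{eq:secondradmomforbridge} from Theorem \ref{th:secradmomthmbr}, gives
\begin{equation}
\mathbb{E}\left[\mathbf{1}_{\lbrace t<\hat{\tau}_D\rbrace}\,r_N(\hat{X}_t(x))\right]
\leq \left(\mathbb{E}\left[\mathbf{1}_{\lbrace t<\hat{\tau}_D\rbrace}\,r_N^2(\hat{X}_t(x))\right]\right)^{\frac{1}{2}}
\leq \left(\left(r_N^2(x)\left(\frac{T-t}{T}\right)+\nu t\right)\left(\frac{T-t}{T}\right)\right)^{\frac{1}{2}} e^{\frac{\lambda t}{2}},
\end{equation}
which is exactly the assertion of the corollary.

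There is no real obstacle here; the proof is a one-line application of Jensen's inequality followed by monotonicity of the square root. The only point requiring a moment's care is the presence of the sub-probability normalisation, which is handled by the elementary estimate $P_t\leq\sqrt{P_t}$ for $P_t\in[0,1]$ (and the degenerate case $P_t=0$ separately); alternatively one can absorb this by noting $\mathbb{E}[\mathbf{1}_A Z]\leq\mathbb{E}[\mathbf{1}_A]^{1/2}\mathbb{E}[\mathbf{1}_A Z^2]^{1/2}\leq\mathbb{E}[\mathbf{1}_A Z^2]^{1/2}$ via Cauchy--Schwarz, which is perhaps the cleanest way to phrase it and avoids case distinctions entirely. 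I would likely present it in that Cauchy--Schwarz form for brevity.
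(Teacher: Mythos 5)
Your proof is correct and takes essentially the same route as the paper, which deduces the corollary in one line from Theorem \ref{th:secradmomthmbr} via Jensen's inequality. Your careful handling of the subprobability factor $\mathbb{P}\lbrace t<\hat{\tau}_D\rbrace$ (via the normalisation or, more cleanly, the Cauchy--Schwarz formulation) simply fills in a detail the paper leaves implicit.
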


In the next subsection we consider the case where there exist constants $\nu \geq1$ and $\lambda \geq 0$ such that inequality \eqref{eq:triineq} holds on the whole of $M\setminus \Cut(N)$.

\subsection{Bridge Property}\label{ss:lapassump}

Suppose for this subsection that there exists constants $\nu \geq1$ and $\lambda \geq 0$ such that inequality \eqref{eq:triineq} holds on $M\setminus \Cut(N)$. Suppose also that $X(x)$ is a \textit{non-explosive} Brownian motion on $M$ starting at $x$, defined on a filtered probability space
\begin{equation}
\left(\Omega,\mathcal{F},\lbrace \mathcal{F}_t \rbrace_{t\geq 0},\mathbb{P}\right)
\end{equation}
satisfying the usual conditions. Note that, by \cite[Theorem~5]{PAPERONE}, if $N$ is compact then inequality \eqref{eq:triineq} \textit{implies} the non-explosion of $X(x)$. For $t \in \left[0,T\right)$ define
\begin{equation}
M_t :=\exp \left[ - \int_0^t \frac{r_N(X_s(x))}{T-s}\bigg \langle\frac{\partial}{\partial r_N},U_s  dB_s\bigg\rangle - \frac{1}{2} \int_0^t \frac{r_N^2(X_s(x))}{(T-s)^2}ds\right] \nonumber
\end{equation}
where $U$ is a horizontal lift of $X(x)$ to the orthonormal frame bundle with antidevelopment $B$. It follows from \cite[Proposition~3]{PAPERONE} and Novikov's criterion that $M$ is a martingale up to time $t$ for each $t \in \left[0,T\right)$. For each $t \in \left[0,T\right)$ we can therefore define a new probability measure $\mathbb{Q}_t$ on $\mathcal{F}_t$ by $d\mathbb{Q}_t = M_t d\mathbb{P}$. It follows from Girsanov's theorem that the process $X(x)$ when restricted to $\left[0,t\right)$ and considered on the filtered probability space
\begin{equation}
\left(\Omega,\mathcal{F}_t,\lbrace \mathcal{F}_s \rbrace_{s\in \left[0,t\right)},\mathbb{Q}_t\right)
\end{equation}
is a \brname bridge between $x$ and $N$ in time $T$. We therefore obtain a new process, denoted by $\hat{X}(x)$, defined on $\left[0,T\right)$ and equivalent to the Brownian motion $X(x)$ on $\left[0,t\right)$ for each $t\in \left[0,T\right)$. By considering an exhaustion of $M$ by regular domains, Theorem \ref{th:secradmomthmbr} and the monotone convergence theorem imply that it satisfies the bridge property
\begin{equation}
\lim_{t \uparrow T}r_N(\hat{X}_t(x)) =0
\end{equation}
almost surely. In particular, if $N$ is a point $p$ then one then can extend $\hat{X}(x)$ to a continuous process on $\left[0,T\right]$ by setting $\hat{X}_T(x)= p$.

\section{Heat Kernel Formula and Comparison Theorem}\label{sec:Sechkform}

In this section we prove the main result of this article. Denote by $\mathcal{M}(N)$ the largest domain in $TN^\bot$ with star-like fibres and such that $\expN \vert_{\mathcal{M}(N)}$ is a diffeomorphism onto its image. Then the image of $\expN \vert_{\mathcal{M}(N)}$ is $M\setminus \Cut(N)$. Recalling that $\theta_N$ denotes the Jacobian determinant of the normal exponential, define also
\begin{equation}\label{eq:Thetadefn}
\Theta_N :=\theta_N \circ {\left(\expN \vert_{\mathcal{M}(N)}\right)}^{-1}
\end{equation}
and note that from \cite{MR2024928} there is the formula
\begin{equation}\label{eq:regE}
\frac{1}{2}\triangle r_N^2 = (\dimm-\dimn) +r_N \frac{\partial}{\partial r_N} \log \Theta_N
\end{equation}
on $M \setminus \Cut(N)$. Recall also the definition of the geometric local time $\mathbb{L}^{\Cut(N)}(\hat{X}(x))$ given in Subsection \ref{ss:firstsec} by formula \eqref{eq:mathbbLdefnforbr}.

\begin{theorem}\label{th:maintheorem}
Suppose that $M$ is a complete and connected Riemannian manifold of dimension $\dimm$, that $N$ is a closed embedded submanifold of $M$ of dimension $\dimn\in \lbrace 0,\ldots,\dimm-1\rbrace$ and that $D$ is a regular domain in $M$. Suppose that $x\in M$ with $T>0$ and that $\hat{X}(x)$ is a \brname bridge between $x$ and $N$ in time $T$. Denote by $\hat{\tau}_D$ the first exit time of this process from $D$. Then, with $q_T(x,N)$ and $p^D_T(x,N)$ defined by \eqref{eq:qdefn} and \eqref{eq:intdirhk}, respectively, we have
\begin{equation}
p^D_T(x,N) = q_T(x,N) \lim_{t\uparrow T}\mathbb{E}\left[\mathbf{1}_{\lbrace t<\hat{\tau}_D \rbrace}\exp\left[\int_0^t \frac{r_N(\hat{X}_s(x))}{T-s} \bigg(d\mathbf{A}_s+d\mathbf{L}_s\bigg)\right]\right]
\end{equation}
where
\begin{equation}\label{eq:ALdefns}
d\mathbf{A}_s := \frac{\partial}{\partial r_N}\log \Theta^{-\frac{1}{2}}_N(\hat{X}_s(x))\,ds, \quad d\mathbf{L}_s := d\mathbb{L}^{\Cut(N)}_s(\hat{X}(x)).
\end{equation}
\end{theorem}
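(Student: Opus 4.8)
The plan is to express the integrated Dirichlet heat kernel probabilistically and then transport the computation to the Fermi bridge via Girsanov, treating the cut locus by localising away from it.

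First I would write, for a Brownian motion $X(x)$ on $M$ killed on exiting $D$,
\begin{equation}
p^D_T(x,N) = \int_N p^D_T(x,y)\,d\vol_N(y) = \lim_{\epsilon\downarrow 0}\frac{1}{\vol(B^z_\epsilon(0))}\mathbb{E}\left[\mathbf{1}_{\lbrace T<\tau_D\rbrace}\mathbf{1}_{\lbrace d(X_T(x),N)<\epsilon\rbrace}\right],
\end{equation}
using the coarea formula and $\theta_N|_N = 1$ as in the proof of Theorem \ref{th:bridgecons}. The key idea is then to change measure so that the killed Brownian motion becomes a killed Fermi bridge. On the event $\lbrace t<\hat\tau_D\rbrace$, the Fermi bridge $\hat X(x)$ and the Brownian motion $X(x)$ are related on $\mathcal{F}_t$ by the Girsanov density $M_t^{-1}$ recorded in Subsection \ref{ss:lapassump} (with $D$ in place of $M$, using the localised version), where
\begin{equation}
M_t = \exp\left[-\int_0^t \frac{r_N(X_s(x))}{T-s}\left\langle \frac{\partial}{\partial r_N}, U_s\,dB_s\right\rangle - \frac{1}{2}\int_0^t \frac{r_N^2(X_s(x))}{(T-s)^2}\,ds\right].
\end{equation}
So for $F$ bounded and $\mathcal{B}_t$-measurable, $\mathbb{E}[\mathbf{1}_{\lbrace t<\tau_D\rbrace}F(X)] = \mathbb{E}[\mathbf{1}_{\lbrace t<\hat\tau_D\rbrace}M_t^{-1}F(\hat X)]$.

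The heart of the proof is to evaluate $M_t^{-1}$. The plan is to apply the Itô formula of Barden and Le, in the form \eqref{eq:rNFormulaforbridge}, to $r_N^2(\hat X(x))$ rather than $r_N(\hat X(x))$, so as to avoid the awkward singularity of $r_N$ at $N$ itself; the local time $L^N$ then does not contribute, since it is supported where $r_N=0$. Combining the resulting semimartingale decomposition with the geometric identity \eqref{eq:regE}, namely $\tfrac12\triangle r_N^2 = (\dimm-\dimn) + r_N\tfrac{\partial}{\partial r_N}\log\Theta_N$, one recognises the drift terms: the $(\dimm-\dimn)$ part produces the Gaussian prefactor's time-dependence, the $r_N \tfrac{\partial}{\partial r_N}\log\Theta_N\,ds$ part produces $d\mathbf{A}$, and the cut-locus term produces $d\mathbf{L} = d\mathbb{L}^{\Cut(N)}(\hat X(x))$. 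After identifying the martingale part of $r_N^2(\hat X(x))$ with the stochastic integral appearing in $M_t$, a direct (if slightly tedious) manipulation — adding and subtracting, integrating by parts in the deterministic factor $(T-s)^{-1}$, and using $r_N(\hat X_t(x))\to 0$ from Subsection \ref{ss:lapassump} — should yield
\begin{equation}
M_t^{-1} = \frac{q_T(x,N)}{q_{T-t}(\hat X_t(x),N)}\exp\left[\int_0^t \frac{r_N(\hat X_s(x))}{T-s}\bigl(d\mathbf{A}_s + d\mathbf{L}_s\bigr)\right]
\end{equation}
on $\lbrace t<\hat\tau_D\rbrace$. Then, taking $F = \mathbf{1}_{\lbrace d(X_T,N)<\epsilon\rbrace}$, pushing the $\epsilon\downarrow 0$ limit through (this is where one uses that, conditionally, $\hat X_T(x)\in N$ and $q_{T-t}(\hat X_t(x),N)$ is, up to the $\theta_N$-normalisation that tends to $1$, exactly the density that makes the small-tube probability $\approx \vol(B_\epsilon^z(0))\,q_{T-t}(\hat X_t(x),N)$), the factor $q_{T-t}(\hat X_t(x),N)$ cancels against the tube normalisation and one is left with $q_T(x,N)$ times the claimed expectation, before letting $t\uparrow T$ by monotone/dominated convergence.

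I expect the main obstacle to be the rigorous handling of the cut locus in two places: first, justifying the Barden–Le Itô formula for $r_N^2(\hat X(x))$ on the domain $D$, which requires knowing that $\mathring C(N)\cap D$ behaves like a union of hypersurfaces and that the more singular part of $\Cut(N)$ (Hausdorff codimension $\geq 2$) is polar for the bridge — both of which are cited from \cite{MR2951748, MR1941909, MR1314177, PAPERONE} — and second, controlling the interchange of the $\epsilon\downarrow 0$ and $t\uparrow T$ limits with the expectation near the terminal time, where the Fermi drift blows up like $(T-s)^{-1}$; here the radial moment bound of Theorem \ref{th:secradmomthmbr} (applied on $D$) is exactly what is needed to keep everything integrable. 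The Girsanov step itself is routine given \cite[Proposition 3]{PAPERONE} and Novikov's criterion on each $[0,t]$ with $t<T$, and the algebraic identification of $M_t^{-1}$ is bookkeeping once \eqref{eq:regE} and \eqref{eq:rNFormulaforbridge} are in hand.
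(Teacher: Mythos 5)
Your central calculation — transferring to the Fermi bridge via Girsanov, applying the Barden--Le It\^o formula to the radial part, and using \eqref{eq:regE} to rewrite the drift so that the Girsanov exponential collapses to $q_T(x,N)\,q_{T-t}(\hat X_t(x),N)^{-1}\exp\!\left[\int_0^t \tfrac{r_N(\hat X_s(x))}{T-s}(d\mathbf{A}_s+d\mathbf{L}_s)\right]$ — is exactly the paper's key computation (the paper applies It\^o to $\log q_{T-s}(\hat X_s,N)$, which contains $r_N^2$, so your choice to work with $r_N^2$ is the same thing in disguise).

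The genuine gap is in how you set up and close the outer limits. Your tube normalisation
\begin{equation}
p^D_T(x,N) = \lim_{\epsilon\downarrow 0}\frac{1}{\vol(B^z_\epsilon(0))}\,\mathbb{E}\!\left[\mathbf{1}_{\lbrace T<\tau_D\rbrace}\mathbf{1}_{\lbrace d(X_T(x),N)<\epsilon\rbrace}\right]
\end{equation}
is fine, but $\mathbf{1}_{\lbrace d(X_T,N)<\epsilon\rbrace}$ is $\mathcal{B}_T$-measurable, so to apply Girsanov on $[0,t]$ you must first project by the Markov property, which replaces the indicator by $\int_{B_\epsilon(N)}p^D_{T-t}(X_t,y)\,d\vol_M(y)$. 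After Girsanov and $\epsilon\downarrow 0$ you therefore obtain
\begin{equation}
p^D_T(x,N) = \mathbb{E}\!\left[\mathbf{1}_{\lbrace t<\hat\tau_D\rbrace}\,\hat M_t\, p^D_{T-t}(\hat X_t(x),N)\right],
\end{equation}
with the Dirichlet integrated heat kernel $p^D_{T-t}(\hat X_t,N)$ appearing, \emph{not} $q_{T-t}(\hat X_t,N)$. Your claim that the small-tube probability is $\approx \vol(B_\epsilon^z(0))\,q_{T-t}(\hat X_t,N)$ is incorrect: under the conditional law given $\mathcal{F}_t$ the relevant density is $p^D_{T-t}(\hat X_t,\cdot)$, and under the Fermi-bridge law the tube probability at time $T$ is simply $1$. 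So the cancellation you rely on does not occur, and what remains after substituting the Girsanov exponential is
\begin{equation}
p^D_T(x,N) = q_T(x,N)\,\mathbb{E}\!\left[\mathbf{1}_{\lbrace t<\hat\tau_D\rbrace}\,\frac{p^D_{T-t}(\hat X_t(x),N)}{q_{T-t}(\hat X_t(x),N)}\exp\!\left[\int_0^t\tfrac{r_N(\hat X_s)}{T-s}(d\mathbf{A}_s+d\mathbf{L}_s)\right]\right].
\end{equation}
To finish you would then need to justify that the ratio $p^D_{T-t}(\hat X_t,N)/q_{T-t}(\hat X_t,N)$ converges to $1$ in a sense strong enough to pass to the $t\uparrow T$ limit inside the expectation; this is a nontrivial short-time asymptotic (and one cannot simply invoke Theorem \ref{th:genvar}, which in the paper is a \emph{consequence} of the present theorem). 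The paper avoids this issue entirely: it begins by inserting the Gaussian test function $q_{T-t}(\cdot,N)$ directly into the Dirichlet semigroup, $p^D_T(x,N)=\lim_{t\uparrow T}P^D_t q_{T-t}(\cdot,N)(x)$, verified by the coarea formula plus the rescaling $\xi\mapsto\sqrt{T-t}\,\xi$ and dominated convergence on the compact closure of $D$; this puts $q_{T-t}(\hat X_t,N)$ — rather than $p^D_{T-t}(\hat X_t,N)$ — in precisely the place where the Girsanov exponential cancels it algebraically. Replacing your hard-indicator tube approximation by the paper's Gaussian approximation removes the extra ratio factor and closes the argument.
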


\begin{proof}
To begin with, we see by the smooth coarea formula that
\small
\begin{equation}
\begin{split}
&\text{ }\lim_{t\uparrow T} \int_M p^{D}_t(x,y)q_{T-t}(y,N)d\vol_M(y)\\
=& \text{ } \lim_{t\uparrow T} \int_N \int_{T_zN^{\perp}} (p^{D}_t(x,\expN)\mathbf{1}_{\mathcal{M}_z(N)}\theta_N)(\xi)(2\pi (T-t))^{-\frac{(\dimm-\dimn)}{2}}\exp\left[-\frac{\| \xi\|^2}{2(T-t)}\right]d\xi d\vol_N(z)\\
=& \text{ } \lim_{t\uparrow T} \int_N \int_{T_zN^{\perp}} (p^{D}_t(x,\expN)\mathbf{1}_{\mathcal{M}_z(N)}\theta_N)(\sqrt{T-t}\xi)(2\pi)^{-\frac{(\dimm-\dimn)}{2}}\exp\left[-\frac{\|\xi\|^2}{2}\right]d\xi d\vol_N(z)\\
=& \text{ } \int_N \int_{T_zN^{\perp}} (p^{D}_T(x,\expN)\mathbf{1}_{\mathcal{M}_z(N)}\theta_N)(0_z)(2\pi)^{-\frac{(\dimm-\dimn)}{2}}\exp\left[-\frac{\|\xi\|^2}{2}\right]d\xi d\vol_N(z)\\
=&\text{ } \int_N p^{D}_T (x,z)d\vol_N(z)
\end{split}
\end{equation}
\normalsize
where $0_z$ denotes the origin of the vector space $T_z N^\perp$ and where the third equality follows from the compactness of the closure of $D$ and the dominated convergence theorem. Then, denoting by $\lbrace P^{D}_t :t\geq0 \rbrace$ the Dirichlet heat semigroup, it follows from Girsanov's theorem that
\begin{equation}
\begin{split}
\int_N p^{D}_T(x,z)d\vol_N(z) &= \lim_{t\uparrow T} \int_M p^{D}_t(x,y)q_{T-t}(y,N)d\vol_M(y) \\
&= \lim_{t\uparrow T} P^{D}_t q_{T-t}(\cdot,N)(x)\\
&=\lim_{t\uparrow T} \mathbb{E}\left[\mathbf{1}_{\lbrace t<\hat{\tau}_D\rbrace} q_{T-t}(\hat{X}_t(x),N)\hat{M}_t\right] \nonumber
\end{split}
\end{equation}
with
\begin{equation}\label{eq:girscov}
\hat{M}_{t\wedge \hat{\tau}_D}= \exp \left[ \int_0^{t\wedge \hat{\tau}_D} \frac{r_N(\hat{X}_s(x))}{T-s}\bigg \langle\frac{\partial}{\partial r_N},\hat{U}_s  dB_s \bigg\rangle- \frac{1}{2} \int_0^{t\wedge \hat{\tau}_D} \frac{r_N^2(\hat{X}_s(x))}{(T-s)^2}\,ds\right]
\end{equation}
where $\hat{U}$ is a horizontal lift of $\hat{X}(x)$ to the orthonormal frame bundle and where $B$ is the associated $\mathbb{R}^\dimm$-valued Brownian motion given by the antidevelopment of $\hat{U}$. Now, using It\^{o}'s formula and formula \eqref{eq:rNFormulaforbridge} we deduce that
\begin{equation}
\begin{split}
&\text{ }\log q_{T-({t\wedge \hat{\tau}_D})} (\hat{X}_{t\wedge \hat{\tau}_D}(x),N) \\
=&\text{ }\log q_T (x,N) - \int_0^{t\wedge \hat{\tau}_D} \frac{r_N(\hat{X}_s(x))}{T-s}\bigg \langle\frac{\partial}{\partial r_N},\hat{U}_s  dB_s\bigg\rangle \\
& \text{ }+\int_0^{t\wedge \hat{\tau}_D} \frac{\partial}{\partial s} \log q_{T-s} (\hat{X}_s(x),N) ds+\int_0^{t\wedge \hat{\tau}_D}  \frac{r_N^2(\hat{X}_s(x))}{(T-s)^2}ds  ds \\
& \text{ }+\frac{1}{2}\int_0^{t\wedge \hat{\tau}_D}\triangle \log q_{T-s} (\hat{X}_s(x),N) ds\\
& \text{ } + \int_0^{t\wedge \hat{\tau}_D} \frac{r_N(\hat{X}_s(x))}{T-s}d\mathbb{L}^{\Cut(N)}_s(\hat{X}(x)). \nonumber
\end{split}
\end{equation}
and so we can eliminate the stochastic integral in \eqref{eq:girscov} by rearrangement and substitution. Finally, using the fact that
\begin{equation}
\frac{\partial}{\partial s} \log q_{T-s}(\cdot,N) = \frac{\dimm-\dimn}{2(T-s)} - \frac{r_N^2(\cdot)}{2(T-s)^2} \nonumber
\end{equation}
and also that
\begin{equation}
\triangle \log q_{T-s} (\cdot,N) = -\frac{\triangle r_N^2(\cdot)}{2(T-s)} \nonumber
\end{equation}
on $M\setminus \Cut(N)$ together with formula \eqref{eq:regE} we can further simplify the resulting expression so as to obtain the desired formula.
\end{proof}

\begin{theorem}\label{th:ihkform}
Suppose that $\lbrace D_i \rbrace_{i=1}^{\infty}$ is an exhaustion of $M$ by regular domains. Then, with $p^M_T(x,N)$ defined by \eqref{eq:ihkintro}, we have
\begin{equation}\label{eq:limitform}
p^M_T(x,N) = q_T(x,N)\lim_{i\uparrow \infty} \lim_{t\uparrow T}\mathbb{E}\left[\mathbf{1}_{\lbrace t<\hat{\tau}_{D_i}\rbrace}\exp\left[\int_0^t \frac{r_N(\hat{X}_s(x))}{T-s} \bigg(d\mathbf{A}_s+d\mathbf{L}_s\bigg)\right]\right]
\end{equation}
where $d\mathbf{A}$ and $d\mathbf{L}$ are defined by \eqref{eq:ALdefns}.
\end{theorem}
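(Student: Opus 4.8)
The plan is to reduce the statement to Theorem~\ref{th:maintheorem} by running through the exhaustion, using only domain monotonicity of the Dirichlet heat kernel together with the monotone convergence theorem. First I would apply Theorem~\ref{th:maintheorem} with $D=D_i$, for each fixed $i$; the key observation is that in the resulting formula the \brname bridge $\hat{X}(x)$ and the random measures $d\mathbf{A}$ and $d\mathbf{L}$ of \eqref{eq:ALdefns} do not depend on $i$ in any way — only the exit time $\hat{\tau}_{D_i}$ does. This gives, for every $i$ large enough that $x\in D_i$,
\begin{equation}
p^{D_i}_T(x,N) = q_T(x,N)\lim_{t\uparrow T}\mathbb{E}\left[\mathbf{1}_{\lbrace t<\hat{\tau}_{D_i}\rbrace}\exp\left[\int_0^t \frac{r_N(\hat{X}_s(x))}{T-s}\bigg(d\mathbf{A}_s+d\mathbf{L}_s\bigg)\right]\right], \nonumber
\end{equation}
where $p^{D_i}_T(x,N)$ is as in \eqref{eq:intdirhk} and $q_T(x,N)$ is a fixed, strictly positive and finite constant, since $M$ is connected and $N$ is non-empty so that $d(x,N)<\infty$.

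Next I would let $i\uparrow\infty$. For the left-hand side, domain monotonicity gives that $p^{D_i}_T(x,z)$ is non-decreasing in $i$ and converges to $p^M_T(x,z)$ for each $z\in N$ (with the usual convention that $p^{D_i}_T(x,\cdot)$ vanishes off $D_i$), so the monotone convergence theorem yields
\begin{equation}
p^{D_i}_T(x,N) = \int_N p^{D_i}_T(x,z)\,d\vol_N(z) \;\uparrow\; \int_N p^M_T(x,z)\,d\vol_N(z) = p^M_T(x,N) \nonumber
\end{equation}
in $\left[0,\infty\right]$. Dividing the previous display by the constant $q_T(x,N)$ and passing to the limit in $i$ then gives exactly \eqref{eq:limitform}. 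I would also note that, although each individual integrand $\mathbf{1}_{\lbrace t<\hat{\tau}_{D_i}\rbrace}\exp[\,\cdots]$ depends on the chosen exhaustion, the iterated limit does not, because $\lim_{i\uparrow\infty}p^{D_i}=p^M$ is independent of it.

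I do not expect any real obstacle here, since all of the analytic content is already in Theorem~\ref{th:maintheorem}; the single point deserving a word of care is that the inner limit $\lim_{t\uparrow T}$ is provided by that theorem \emph{separately for each fixed $i$}, so the two limits are never interchanged — which in general one could not do, the integrands being monotone in $i$ but not in $t$. The only alternative route would be to feed the coarea identity from the proof of Theorem~\ref{th:maintheorem} directly into $p^M=\lim_i p^{D_i}$, but the argument above is shorter.
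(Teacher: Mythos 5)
Your argument is correct and is essentially the paper's own proof: apply Theorem~\ref{th:maintheorem} to each $D_i$, observe that $p^{D_i}_T(x,N)\uparrow p^M_T(x,N)$ by domain monotonicity and monotone convergence, and divide by the positive constant $q_T(x,N)$. The only difference is that you spell out the (correct) bookkeeping — that the bridge, $d\mathbf{A}$, $d\mathbf{L}$ depend on $i$ only through $\hat{\tau}_{D_i}$, and that the two limits are taken in a fixed order and never interchanged — which the paper leaves implicit.
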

\begin{proof}
Recalling that $p^M$ is given as the limit of the increasing sequence of Dirichlet heat kernels $p^{D_i}$, it follows from the monotone convergence theorem that
\begin{equation}
p^M_T(x,N) = \lim_{i\uparrow \infty} p^{D_i}_T(x,N)
\end{equation}
and so the result follows from Theorem \ref{th:maintheorem}.
\end{proof}

The absolutely continuous random measure $d\mathbf{A}$ can be understood precisely in terms of Jacobi fields, using Heintze and Karcher's comparison theorem for $\frac{\partial}{\partial r_N}\log \theta_N$ which was carefully explained in \cite[Section~3]{MR533065}. The singular random measure $d\mathbf{L}$ vanishes if the cut locus is polar for $\hat{X}(x)$. In particular, we have the following corollary of Theorem \ref{th:ihkform}, which shows that we recover identity \eqref{eq:inthkrn} in the Euclidean setting.

\begin{corollary}\label{cor:yarasas}
Suppose that $M$ is stochastically complete, that the cut locus of $N$ has Hausdorff dimension at most $\dimm-2$ and that one of the following conditions is satisfied:
\begin{description}
\item[(A1)] $\dimn \in \lbrace 0,\ldots,\dimm-1\rbrace$, the sectional curvature of planes containing the radial direction vanishes and $N$ is totally geodesic;
\item[(A2)] $\dimn \in \lbrace 0,\dimm-1\rbrace$, the Ricci curvature in the radial direction vanishes and $N$ is minimal.
\end{description}
Then we have
\begin{equation}
p^M_T(x,N) = q_T(x,N)\nonumber
\end{equation}
for all $x \in M$ and $T>0$.
\end{corollary}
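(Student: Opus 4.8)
The plan is to apply Theorem \ref{th:ihkform} and show that under either hypothesis the random measures $d\mathbf{A}$ and $d\mathbf{L}$ both vanish, so that the exponential factor in \eqref{eq:limitform} is identically $1$ and the limit collapses to $1$ once we account for the exhaustion. First I would treat the singular part: since the cut locus of $N$ is assumed to have Hausdorff dimension at most $\dimm-2$, the open hypersurface portion $\mathring{C}(N)$ appearing in the construction of $\mathbb{L}^{\Cut(N)}(\hat{X}(x))$ in Subsection \ref{ss:firstsec} is empty (the codimension-one stratum is absent), so $L^{\mathring{C}(N)}(\hat{X}(x)) \equiv 0$ and hence $d\mathbf{L}_s = d\mathbb{L}^{\Cut(N)}_s(\hat{X}(x)) \equiv 0$ by \eqref{eq:mathbbLdefnforbr}. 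Equivalently, one observes that $\Cut(N)$ is then polar for the \brname bridge $\hat{X}(x)$, as remarked after Theorem \ref{th:ihkform}.

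Next I would show $d\mathbf{A}_s \equiv 0$, which by \eqref{eq:ALdefns} amounts to $\frac{\partial}{\partial r_N}\log \Theta_N \equiv 0$ on $M\setminus\Cut(N)$. By formula \eqref{eq:regE} this is equivalent to $\frac{1}{2}\triangle r_N^2 = \dimm-\dimn$ on $M\setminus\Cut(N)$. Under (A1), the Jacobi field description of $\theta_N$ explained in \cite[Section~3]{MR533065} (Heintze–Karcher): with the sectional curvatures of planes containing $\frac{\partial}{\partial r_N}$ vanishing, the normal Jacobi fields evolve linearly, and with $N$ totally geodesic the initial conditions (given by the second fundamental form) contribute nothing, so each $\dimn$-dimensional unstable Jacobi field is constant in the tangential directions and each $(\dimm-\dimn)$-dimensional one is linear with unit initial derivative; hence $\theta_N$, and therefore $\Theta_N$, is independent of the radial coordinate, giving $\frac{\partial}{\partial r_N}\log\Theta_N = 0$. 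Under (A2) with $\dimn=0$ this is the classical fact that Ricci-flatness in the radial direction forces $\triangle r_N^2 = \dimm$ (the infinitesimal volume element satisfies the Riccati/Bochner identity $\partial_{r}\Delta r + |\Hess r|^2 + \Ric(\partial_r,\partial_r) = 0$, and $\Ric(\partial_r,\partial_r)=0$ together with the Cauchy–Schwarz bound $|\Hess r|^2 \ge (\Delta r)^2/(\dimm-1)$ pins $\Delta r = (\dimm-1)/r$); for $\dimn=\dimm-1$ the same Riccati comparison along the single normal direction, using minimality of the hypersurface $N$ to kill the initial mean-curvature term, yields $r_N\,\partial_{r_N}\log\Theta_N = 0$. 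Either way, \eqref{eq:regE} gives $\frac12\triangle r_N^2 = \dimm-\dimn$, so $d\mathbf{A}_s \equiv 0$.

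With both random measures vanishing, the integrand in \eqref{eq:limitform} reduces to $\mathbf{1}_{\lbrace t<\hat{\tau}_{D_i}\rbrace}$. Since $M$ is stochastically complete and (by the discussion in Subsection \ref{ss:lapassump}, the inequality \eqref{eq:triineq} holding trivially with $\nu=\dimm-\dimn$, $\lambda=0$) the \brname bridge $\hat{X}(x)$ is non-explosive on $\left[0,T\right)$, we have $\hat{\tau}_{D_i}\uparrow\infty$ as $i\uparrow\infty$ almost surely, so $\lim_{i\uparrow\infty}\lim_{t\uparrow T}\mathbb{E}[\mathbf{1}_{\lbrace t<\hat{\tau}_{D_i}\rbrace}] = 1$ by monotone convergence. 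Hence $p^M_T(x,N) = q_T(x,N)$. The main obstacle is the geometric input of the second paragraph, namely verifying carefully that the stated curvature-and-second-fundamental-form hypotheses force $\frac{\partial}{\partial r_N}\log\Theta_N$ to vanish identically off the cut locus; once that Jacobi field computation is in hand — for which \cite[Section~3]{MR533065} and formula \eqref{eq:regE} do the heavy lifting — the probabilistic part is immediate from Theorem \ref{th:ihkform}.
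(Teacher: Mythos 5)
Your overall strategy mirrors the paper's: apply Theorem \ref{th:ihkform}, argue $d\mathbf{L}\equiv 0$ because the codimension-$\geq 2$ cut locus forces $\mathring{C}(N)=\emptyset$ (equivalently, polarity for the bridge), argue $d\mathbf{A}\equiv 0$ via Heintze--Karcher, and close with the bridge property and monotone convergence. The paper's proof is a one-line citation of exactly these three inputs, and your treatment of the cut-locus step, the exhaustion limit, and case (A1) correctly fills in what the paper leaves implicit.

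There is, however, a genuine gap in your justification of (A2). From $\Ric(\partial_{r_N},\partial_{r_N})=0$, the traced Riccati/Bochner identity $\partial_{r}\triangle r + |\Hess r|^2 + \Ric(\partial_r,\partial_r)=0$ combined with the Cauchy--Schwarz bound $|\Hess r|^2 \geq (\triangle r)^2/(\dimm-1)$ gives only the \emph{inequality} $\partial_r\triangle r \leq -(\triangle r)^2/(\dimm-1)$. Together with the initial condition at $N$ (and minimality when $\dimn=\dimm-1$), this yields $\triangle r \leq (\dimm-1)/r$ for $\dimn=0$ and $\triangle r\leq 0$ for $\dimn=\dimm-1$, i.e. $\partial_{r_N}\log\Theta_N \leq 0$, which is exactly the one-sided Heintze--Karcher bound and translates through \eqref{eq:regE} into $\frac{1}{2}\triangle r_N^2 \leq \dimm-\dimn$, not equality. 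To conclude $d\mathbf{A}\equiv 0$ you would need the Cauchy--Schwarz step to be an equality, i.e. $\Hess r_N$ a multiple of the identity on $\partial_{r_N}^\perp$ along every radial geodesic; vanishing of the \emph{trace} $\Ric(\partial_{r_N},\partial_{r_N})$ does not force this. That is precisely the extra control which (A1), by fixing the full Jacobi operator $R(\cdot,\partial_{r_N})\partial_{r_N}=0$, does provide. As written, your argument for (A2) establishes only $p^M_T(x,N)\geq q_T(x,N)$. You should either supply an independent argument for the reverse inequality, or note that the corollary's (A2) case appears to require a stronger hypothesis than stated --- the paper's own proof is too terse to tell whether it addresses this, but a bare appeal to Heintze--Karcher, which is one-sided, does not.
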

\begin{proof}
The corollary follows from Theorem \ref{th:ihkform}, the Heintze-Karcher theorem and the monotone convergence theorem.
\end{proof}

If $N$ is a point then Theorem \ref{th:ihkform} provides a formula for the heat kernel itself. In this case, by the Heintze-Karcher theorem, the random measure $d\mathbf{A}$ has the property of being non-negative if the Ricci curvature of $M$ is non-negative and non-positive if the Ricci curvature of $M$ is non-positive. The Heintze-Karcher also yields a comparison theorem, stated below, in which we view $\mathbb{H}^\dimn_\kappa$ as a totally geodesic embedded submanifold of $\mathbb{H}^\dimm_\kappa$ and set $\mathbb{H}^\dimn_0 := \mathbb{R}^\dimn$ and $\mathbb{H}^\dimm_0 := \mathbb{R}^\dimm$. It is a partial generalization of the heat kernel comparison theorem of Cheeger and Yau \cite{MR615626}.

\begin{theorem}\label{th:ihkcomparison}
Suppose that $M$ is stochastically complete and that one of the following conditions is satisfied:
\begin{description}
\item[(B1)] $\dimn \in \lbrace 0,\ldots,\dimm-1\rbrace$, the sectional curvature of planes containing the radial direction is bounded below by $\kappa\leq 0$ and $N$ is totally geodesic;
\item[(B2)] $\dimn \in \lbrace 0,\dimm-1\rbrace$, the Ricci curvature in the radial direction is bounded below by $(\dimm-1)\kappa\leq 0$ and $N$ is minimal.
\end{description}
Then we have
\begin{equation}
p^M_T(x,N)\geq p^{\mathbb{H}^\dimm_\kappa}_T(y,{\mathbb{H}^\dimn_\kappa})
\end{equation}
for any $y \in \mathbb{H}^\dimm_\kappa$ satisfying $r_N(x)\leq r_{\mathbb{H}^\dimn_\kappa}(y)$.
\end{theorem}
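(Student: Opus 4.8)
The plan is to compare the probabilistic representation of $p^M_T(x,N)$ given by Theorem \ref{th:ihkform} with that of $p^{\mathbb{H}^\dimm_\kappa}_T(y,\mathbb{H}^\dimn_\kappa)$, exploiting the sign of the random measure $d\mathbf{A}$ under curvature bounds. First I would observe that under either hypothesis (B1) or (B2), the submanifold $N$ is totally geodesic or minimal, so that along each normal geodesic the relevant comparison of $\frac{\partial}{\partial r_N}\log\theta_N$ with the corresponding model quantity on $\mathbb{H}^\dimm_\kappa$ is governed purely by the curvature lower bound via the Heintze–Karcher comparison theorem (as recalled after Theorem \ref{th:ihkform} and explained in \cite[Section~3]{MR533065}). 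Concretely, the Heintze–Karcher estimate gives
\begin{equation}
\frac{\partial}{\partial r_N}\log\Theta_N(\hat X_s(x)) \;\geq\; \frac{\partial}{\partial r}\log\Theta^\kappa(r)\Big|_{r=r_N(\hat X_s(x))}, \nonumber
\end{equation}
where $\Theta^\kappa$ is the Jacobian density for the model pair $(\mathbb{H}^\dimn_\kappa,\mathbb{H}^\dimm_\kappa)$; since $\kappa\leq 0$ this model density is radially decreasing in a way that makes $\frac{\partial}{\partial r_N}\log\Theta_N^{-1/2}\leq 0$ in the model and the comparison pushes the ambient quantity in the favourable direction. Thus $r_N\, d\mathbf{A}_s$ for $M$ dominates the analogous (absolutely continuous) measure for the model, and the singular term $d\mathbf{L}$ is non-negative, so in the formula \eqref{eq:limitform} the integrand for $M$ is pointwise at least the model integrand evaluated along a Fermi bridge with the same radial law.

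The second step is to reduce everything to a one-dimensional comparison of radial processes. Because the model pair has a rotationally symmetric warped-product structure around $\mathbb{H}^\dimn_\kappa$, the model integrand depends on the Fermi bridge only through its radial part $r_N(\hat X_s)$, and by \eqref{eq:rNFormulaforbridge} the radial part of the Fermi bridge on $M$ satisfies an SDE driven by a one-dimensional Brownian motion with drift $\frac12\triangle r_N - \frac{r_N}{T-s}$ (plus the non-negative local-time term at $N$ and the non-positive one at $\Cut(N)$). Under the curvature hypotheses one has a comparison $\frac12\triangle r_N \leq \frac12(\triangle r)^\kappa$ off the cut locus, where $(\triangle r)^\kappa$ is the model Laplacian of the distance to $\mathbb{H}^\dimn_\kappa$; a standard one-dimensional comparison theorem for SDEs then shows that $r_N(\hat X_s(x))$ is stochastically dominated by the radial part $r^\kappa_s$ of a model Fermi bridge started at $r_{\mathbb{H}^\dimn_\kappa}(y)\geq r_N(x)$. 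Feeding this back: the model integrand is a decreasing function of the radial variable at each fixed time (again because $\kappa\leq 0$), so replacing $r_N(\hat X_s(x))$ by the stochastically larger $r^\kappa_s$ decreases the expectation. Combining with the pointwise domination from the first step and passing to the limit in $t\uparrow T$ and $i\uparrow\infty$ via monotone convergence (exactly as in the proof of Theorem \ref{th:ihkform}, using stochastic completeness so that $\hat\tau_{D_i}\uparrow\infty$ in the appropriate sense) yields $p^M_T(x,N)\geq q_T(x,N)\cdot(\text{model expectation}) = p^{\mathbb{H}^\dimm_\kappa}_T(y,\mathbb{H}^\dimn_\kappa)$, where the last equality is Theorem \ref{th:ihkform} applied to the model, whose cut locus of $\mathbb{H}^\dimn_\kappa$ is empty so that $d\mathbf{L}$ vanishes and $q$ matches $p$ up to the explicitly computable Heintze–Karcher factor.

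The main obstacle I anticipate is making the two comparisons compatible: the pointwise domination of the $d\mathbf{A}$-integrand wants the radial process on $M$ to be at the \emph{same} value as on the model, while the stochastic domination of radial processes moves that value up, and one must check that the net effect still goes the right way. The resolution is that the model integrand $\exp\big[\int_0^t \frac{\rho_s}{T-s}\,d\mathbf{A}^\kappa_s\big]$ is, for $\kappa\leq 0$, a \emph{decreasing} functional of the radial path $\rho$ (each increment of $d\mathbf{A}^\kappa$ is non-positive and becomes more negative as $\rho$ grows), so enlarging the radial path only helps the inequality; one has to verify this monotonicity carefully from the explicit model Jacobian, and also handle the local-time term $L^N$ at $N$ — which is harmless since it only makes $r_N$ smaller — and the restriction to $D_i$, where the indicator $\mathbf 1_{\{t<\hat\tau_{D_i}\}}$ must be controlled when passing to the limit. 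A secondary technical point is that the Heintze–Karcher comparison is classically stated off the cut locus, so one invokes the Hausdorff-dimension bound on $\Cut(N)$ together with the singular term $d\mathbf{L}\geq 0$ to ensure no contribution is lost; this is the same device used in Corollary \ref{cor:yarasas}.
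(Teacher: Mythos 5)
Your proposal follows essentially the same route as the paper: use the Heintze--Karcher comparison to bound the $d\mathbf{A}$ integrand by a non-decreasing function $g_\kappa$ of the radial variable, discard the non-negative singular term $d\mathbf{L}$, invoke a one-dimensional SDE comparison theorem to stochastically dominate $r_N(\hat X(x))$ by the radial part of a model \brname bridge started from a larger radius, and then use the monotonicity of $g_\kappa$ (which you correctly identify as the crucial step reconciling the two comparisons). One small but real slip: your first displayed inequality has the Heintze--Karcher comparison pointing the wrong way --- a lower bound on the curvature gives an \emph{upper} bound $\frac{\partial}{\partial r_N}\log\Theta_N \leq \frac{\partial}{\partial r}\log\Theta^\kappa$, not a lower bound; your subsequent conclusion that $d\mathbf{A}$ for $M$ dominates the model measure is only consistent with the correct $\leq$ direction, so this is presumably a typo rather than a conceptual error, but it should be fixed. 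The paper also handles $\kappa=0$ as a separate (trivial) base case where $g_0\equiv 0$ and no SDE comparison is needed, while you fold it into the general argument; either presentation works.
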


\begin{proof}
First suppose $\kappa =0$. Then $\frac{\partial}{\partial r_N}\log \Theta_N \leq 0$, by \cite[Theorem~1]{PAPERONE}, so the assertion follows from Theorem \ref{th:ihkform} since $\mathbf{L}$ is non-decreasing. So assume $\kappa <0$ and define a function $g_\kappa$ by
\begin{equation}
g_\kappa(t) := (\dimm-\dimn-1)\left(\sqrt{-\kappa}t \coth (\sqrt{-\kappa}t)-1\right)+\dimn\sqrt{-\kappa}t \tanh(\sqrt{-\kappa}t ).
\end{equation}
Then, by the Heintze-Karcher theorem, it follows that
\begin{equation}
r_N\frac{\partial}{\partial r_N}\log \Theta_N \leq g_\kappa(r_N).
\end{equation}
Since $\mathbf{L}$ is non-decreasing with $g_\kappa$ non-negative, it follows from this, Theorem \ref{th:ihkform} and dominated convergence that
\begin{equation}
p^M_T(x,N) \geq q_T(x,N)\mathbb{E}\left[\exp\left[-\int_0^T \frac{g_\kappa(r_N(\hat{X}_s(x)))}{2(T-s)} \,ds\right]\right].
\end{equation}
Now let $\hat{Y}(y)$ be a \brname bridge between $y$ and $\mathbb{H}^\dimn_\kappa$ in time $T$ such that its radial part $r_{\mathbb{H}^\dimn_\kappa}(\hat{Y}(y))$ satisfies a stochastic differential equation driven by the one-dimensional Brownian motion $\beta$ which drives the equation for $r_{N}(\hat{X}(x))$. Then the Heintze-Karcher theorem, applied to the coefficients of the stochastic differential equation for $r_N^2(\hat{X}(x))$, combined with the comparison theorem for solutions to stochastic differential equations given in \cite{MR2199554}, implies that $r_N(\hat{X}(x))\leq r_{\mathbb{H}^\dimn_\kappa}(\hat{Y}(y))$, almost surely. Since $g_\kappa$ is non-decreasing this yields
\begin{equation}\label{eq:ihkformmodel}
p^M_T(x,N) \geq q_T(y,\mathbb{H}^\dimn_\kappa)\mathbb{E}\left[\exp\left[-\int_0^T \frac{g_\kappa(r_{\mathbb{H}^\dimn_\kappa}(\hat{Y}_s(y)))}{2(T-s)} \,ds\right]\right].
\end{equation}
But the right-hand side of \eqref{eq:ihkformmodel} is equal to $p^{\mathbb{H}^\dimm_\kappa}_T(y,{\mathbb{H}^\dimn_\kappa})$, by Theorem \ref{th:ihkform} and the fact that
\begin{equation}
g_\kappa(r_{\mathbb{H}^\dimn_\kappa}) = r_{\mathbb{H}^\dimn_\kappa}\frac{\partial}{\partial r_{\mathbb{H}^\dimn_\kappa}}\log \Theta_{\mathbb{H}^\dimn_\kappa},
\end{equation}
so the result follows.
\end{proof}

\subsection{Hypersurface Local Time}\label{ss:hltcomp}

If $X(x)$ is a Brownian motion starting at $x \in M$ then $r_N(X(x))$ is a continuous semimartingale. For the case in which $N$ is a hypersurface, the local time of this process at zero is denoted by $L^N(X(x))$ and referred to as the local time of $X(x)$ on $N$. In certain circumstances, such as when $M$ is compact, it was explained in \cite{PAPERONE} that there is the formula
\begin{equation}\label{eq:revuzform}
\mathbb{E}\left[L^N_t(X(x))\right] = \int_0^t p^M_s(x,N)\,ds.
\end{equation}
In the hypersurface case, the integrated heat kernel is therefore given by the time derivative of a mean local time.

\begin{example}
Viewing the $2$-dimensional hyperbolic space $\mathbb{H}^2$ as an embedded totally geodesic submanifold of $\mathbb{H}^3$, if $X(x)$ is a Brownian motion in $\mathbb{H}^3$ starting at $x$ then $r_{\mathbb{H}^2}(X(x))$ is a Markov process and so, by results in \cite{MR1278079} and a change of variables, formula \eqref{eq:revuzform} holds in this setting. Therefore, by \eqref{eq:hkformh2h3}, it follows that
\begin{equation}
\begin{split}
\lim_{t\uparrow \infty}\mathbb{E}\left[ L^{\mathbb{H}^2}_t(X(x))\right] &= \sech (r_{\mathbb{H}^2}(x))) \int_0^\infty (2\pi t)^{-\frac{1}{2}} \exp\left[-\frac{r_{\mathbb{H}^2}^2(x)}{2t}-\frac{t}{2}\right] dt\\
&= \sech(r_{\mathbb{H}^2}(x))\exp\left[-r_{\mathbb{H}^2}(x)\right].
\end{split}
\end{equation}
In contrast, if $\mathbb{R}^2$ is viewed as a linear subspace of $\mathbb{R}^3$ with $X(x)$ a Brownian motion in $\mathbb{R}^3$ starting at $x$ then
\begin{equation}
\lim_{t\uparrow \infty}\mathbb{E}\left[ L^{\mathbb{R}^2}_t(X(x))\right] = \infty.
\end{equation}
\end{example}

Theorem \ref{th:ihkcomparison} yields the following comparison for the mean local time, where we retain the notational convention outlined before Theorem \ref{th:ihkcomparison}.

\begin{corollary}
Suppose that $N$ is a minimal hypersurface and that the Ricci curvature in the radial direction is bounded below by $(\dimm-1)\kappa\leq 0$. Suppose also that $X(x)$ is a non-explosive Brownian motion on $M$ starting at $x$, denote by $L^N(X(x)$ the local time on $X(x)$ on $N$ and assume that formula \eqref{eq:revuzform} holds. Denote by $Y(y)$ a Brownian motion on $\mathbb{H}^\dimm_\kappa$ starting at $y$ with $r_N(x) \leq r_{\mathbb{H}^{\dimm-1}_\kappa}(y)$. Then
\begin{equation}\label{eq:ltcomp}
\mathbb{E}[ L^N_t(X(x))] \geq \mathbb{E}[ L^{\mathbb{H}^{\dimn-1}_\kappa}_t(Y(y))]
\end{equation}
for all $t\geq0$.
\end{corollary}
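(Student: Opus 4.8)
The plan is to reduce the assertion to the integrated heat kernel comparison already established in Theorem \ref{th:ihkcomparison}. Since $N$ is a minimal hypersurface we have $\dimn=\dimm-1$, so the hypotheses of Theorem \ref{th:ihkcomparison} are met in case (B2): the Ricci curvature in the radial direction is bounded below by $(\dimm-1)\kappa\leq0$, and the stochastic completeness of $M$ required there is a consequence of the assumed non-explosion of $X(x)$. Consequently, for every $s>0$ and every $y\in\mathbb{H}^\dimm_\kappa$ with $r_N(x)\leq r_{\mathbb{H}^{\dimm-1}_\kappa}(y)$,
\begin{equation}
p^M_s(x,N)\geq p^{\mathbb{H}^\dimm_\kappa}_s\bigl(y,\mathbb{H}^{\dimm-1}_\kappa\bigr).\nonumber
\end{equation}

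First I would invoke the standing hypothesis to write $\mathbb{E}[L^N_t(X(x))]=\int_0^t p^M_s(x,N)\,ds$ via formula \eqref{eq:revuzform}. Next I would verify the corresponding identity on the model space, namely $\mathbb{E}[L^{\mathbb{H}^{\dimm-1}_\kappa}_t(Y(y))]=\int_0^t p^{\mathbb{H}^\dimm_\kappa}_s(y,\mathbb{H}^{\dimm-1}_\kappa)\,ds$. This holds because $r_{\mathbb{H}^{\dimm-1}_\kappa}(Y(y))$ is a one-dimensional diffusion, hence a Markov process, by the rotational symmetry of $\mathbb{H}^\dimm_\kappa$ about the totally geodesic hypersurface $\mathbb{H}^{\dimm-1}_\kappa$; arguing exactly as in the hyperbolic example following \eqref{eq:revuzform}, via \cite{MR1278079} and a change of variables, yields the Revuz-type identity. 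Integrating the pointwise-in-$s$ inequality displayed above over $[0,t]$ and combining with these two identities then gives \eqref{eq:ltcomp}.

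The only step requiring genuine care is the verification of \eqref{eq:revuzform} on $\mathbb{H}^\dimm_\kappa$: one must know that $r_{\mathbb{H}^{\dimm-1}_\kappa}(Y(y))$ is Markov, which is where the totally geodesic (in particular minimal) nature of the model hypersurface enters, and that $s\mapsto p^{\mathbb{H}^\dimm_\kappa}_s(y,\mathbb{H}^{\dimm-1}_\kappa)$ is the associated Revuz density, computable from the heat kernel of that radial diffusion. Beyond this bookkeeping there is no substantive obstacle: the inequality is a direct corollary of Theorem \ref{th:ihkcomparison} together with the Revuz correspondence expressing mean local time as a time integral of the integrated heat kernel.
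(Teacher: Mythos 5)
Your proposal is correct and follows exactly the route of the paper's (one-line) proof: apply case (B2) of Theorem \ref{th:ihkcomparison} to compare the integrated heat kernels, then pass to mean local times via formula \eqref{eq:revuzform}. You usefully spell out the two points the paper leaves implicit — that non-explosion supplies the stochastic completeness needed in Theorem \ref{th:ihkcomparison}, and that the Revuz identity on the model space $\mathbb{H}^\dimm_\kappa$ holds because $r_{\mathbb{H}^{\dimm-1}_\kappa}(Y(y))$ is a one-dimensional Markov diffusion, just as in the $\mathbb{H}^2\subset\mathbb{H}^3$ example — but the argument is the same.
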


\begin{proof}
This follows from directly from Theorem \ref{th:ihkcomparison} and formula \eqref{eq:revuzform}.
\end{proof}

In particular, if $\kappa = 0$ with $x \in N$ and $r_{\mathbb{H}_0^{\dimm-1}}(y)=0$ then the right-hand side of \eqref{eq:ltcomp} is equal to $\sqrt{\frac{2t}{\pi}}$.

\section{Lower Bound and Asymptotic Relation}\label{sec:lbar}

The heat kernel lower bounds of Cheeger and Yau \cite{MR615626} were proved using a bound on the Ricci curvature in the radial direction and a Laplacian comparison theorem. These are similar to the objects we use, although our method is quite different. Our method is closer in spirit to that of Wang \cite{MR1487443}, who also used stochastic techniques with unbounded curvature, but only in the one point case. Note that the constants $C_1,C_2$ and $\Lambda$ appearing in the following theorem typically depend upon $N$.

\begin{theorem}\label{th:quadcurvlb}
Suppose that $M$ is stochastically complete and that there exist constants $C_1,C_2 \geq 0$ such that one of the following conditions is satisfied off the union of $N$ and its cut locus:
\begin{description}
\item[(C1)] the sectional curvatures of planes containing the radial direction are bounded below by $-(C_1 + C_2 r_N)^2$ and there exists a constant $\Lambda \geq 0$ such that the principal curvatures of $N$ are bounded in modulus by $\Lambda$;
\item[(C2)] $\dimn=0$ and the Ricci curvature in the radial direction is bounded below by $-(\dimm-1)(C_1 + C_2 r_N)^2$;
\item[(C3)] $\dimn=\dimm-1$ and the Ricci curvature in the radial direction is bounded below by $-(\dimm-1)(C_1 + C_2 r_N)^2$ and there exists a constant $\Lambda\geq 0$ such that the mean curvature of $N$ is bounded in modulus by $\Lambda$.
\end{description}
Then for each $T>0$ there exists a constant $C\geq 0$, depending only on $T,C_1,C_2,\Lambda,\dimm$ and $\dimn$, such that
\begin{equation}
p^M_t(x,N) \geq t^{-\frac{(\dimm-\dimn)}{2}}\exp\left[-\frac{r_N^2(x)}{2t}-C(1+r^2_N(x))\right]\nonumber
\end{equation}
for all $x \in M$ and $t \in \left(0,T\right]$.
\end{theorem}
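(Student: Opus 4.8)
The plan is to obtain the bound from the heat kernel formula of Theorem~\ref{th:ihkform}, by discarding the non-negative cut-locus term, controlling the absolutely continuous term $d\mathbf{A}$ via the Laplacian comparison of \cite{PAPERONE}, and then applying Jensen's inequality together with the radial moment estimates of Theorem~\ref{th:secradmomthmbr} and Corollary~\ref{cor:frmebr}.

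First I would fix $\tau\in(0,T]$ and record that, under any of (C1)--(C3), the comparison of \cite{PAPERONE} recalled in the Introduction gives $\tfrac12\triangle r_N^2\le(\dimm-\dimn)+\big(\dimn\Lambda+(\dimm-1)(C_1+C_2 r_N)\big)r_N$ on $M\setminus\Cut(N)$, so by $r_N\le 1+r_N^2$ inequality~\eqref{eq:triineq} holds there with $\nu:=(\dimm-\dimn)+\dimn\Lambda+(\dimm-1)C_1\ge 1$ and $\lambda:=\dimn\Lambda+(\dimm-1)(C_1+C_2)\ge 0$. Hence the \brname bridge $\hat{X}(x)$ between $x$ and $N$ in time $\tau$ of Section~\ref{sec:fbridges} is well defined and non-explosive on $[0,\tau)$, and Theorems~\ref{th:secradmomthmbr} and~\ref{th:ihkform} and Corollary~\ref{cor:frmebr} apply with these constants. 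Combining \eqref{eq:regE} with the same comparison I would bound $r_N\tfrac{\partial}{\partial r_N}\log\Theta_N\le\big(\dimn\Lambda+(\dimm-1)(C_1+C_2 r_N)\big)r_N$ on $M\setminus\Cut(N)$; since $d\mathbf{L}\ge 0$ and $r_N\ge 0$, dropping $d\mathbf{L}$ and writing out $d\mathbf{A}_s=-\tfrac12\tfrac{\partial}{\partial r_N}\log\Theta_N(\hat{X}_s(x))\,ds$, Theorem~\ref{th:ihkform} then yields
\[
p^M_\tau(x,N)\ \ge\ q_\tau(x,N)\,\lim_{i\uparrow\infty}\lim_{t\uparrow\tau}\mathbb{E}\!\left[\mathbf{1}_{\{t<\hat{\tau}_{D_i}\}}\exp\!\left(-\int_0^t\phi_s\,ds\right)\right],
\]
where $\phi_s:=\tfrac{1}{2(\tau-s)}\big((\dimn\Lambda+(\dimm-1)C_1)\,r_N(\hat{X}_s(x))+(\dimm-1)C_2\,r_N^2(\hat{X}_s(x))\big)\ge 0$ (on $\Cut(N)$ this holds trivially since $d\mathbf{A}$ vanishes there).

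Next I would estimate the exponent. Since $\mathbf{1}_{\{t<\hat{\tau}_{D_i}\}}\le\mathbf{1}_{\{s<\hat{\tau}_{D_i}\}}$ for $s\le t$, Tonelli's theorem gives $\mathbb{E}[\mathbf{1}_{\{t<\hat{\tau}_{D_i}\}}\int_0^t\phi_s\,ds]\le\int_0^\tau\mathbb{E}[\mathbf{1}_{\{s<\hat{\tau}_{D_i}\}}\phi_s]\,ds$. Feeding in Theorem~\ref{th:secradmomthmbr} for the $r_N^2$ contribution (whose factor $\tau-s$ cancels the $(\tau-s)^{-1}$ in $\phi_s$) and Corollary~\ref{cor:frmebr} for the $r_N$ contribution (whose factor $(\tau-s)^{1/2}$ leaves only the integrable singularity $(\tau-s)^{-1/2}$), and bounding $r_N^2(x)+\nu\tau\le(1+\nu T)(1+r_N^2(x))$, I expect to arrive at
\[
\int_0^\tau\mathbb{E}[\mathbf{1}_{\{s<\hat{\tau}_{D_i}\}}\phi_s]\,ds\ \le\ C\,(1+r_N^2(x))
\]
with $C$ depending only on $T,C_1,C_2,\Lambda,\dimm,\dimn$ and, crucially, independent of $i$ and of $\tau\in(0,T]$. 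Then, applying Jensen's inequality to $u\mapsto e^{-u}$ conditionally on $\{t<\hat{\tau}_{D_i}\}$, letting $t\uparrow\tau$ (dominated convergence for the indicator, monotone convergence for $\int_0^t\phi_s\,ds$) and then $i\uparrow\infty$ (using $\mathbb{P}(\hat{\tau}_{D_i}\ge\tau)\uparrow 1$, which follows from the non-explosion and bridge property of $\hat{X}(x)$ established in Section~\ref{sec:fbridges}), the inner double limit is bounded below by $e^{-C(1+r_N^2(x))}$. Since $q_\tau(x,N)=(2\pi)^{-(\dimm-\dimn)/2}\tau^{-(\dimm-\dimn)/2}\exp(-r_N^2(x)/2\tau)$, absorbing $(2\pi)^{-(\dimm-\dimn)/2}$ into $C$ gives the stated bound at time $\tau$, and since $C$ does not depend on $\tau$ it holds for all $\tau\in(0,T]$.

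The main obstacle I anticipate is the analysis near the terminal time: making the drift integral $\int_0^\tau\tfrac{r_N(\hat{X}_s(x))}{\tau-s}(d\mathbf{A}_s+d\mathbf{L}_s)$ integrable relies on the sharp $(\tau-s)$- and $(\tau-s)^{1/2}$-type decay supplied by Theorem~\ref{th:secradmomthmbr} and Corollary~\ref{cor:frmebr}, and in particular the first moment only just suffices. The other delicate point is the bookkeeping of the two limits in $i$ and $t$ through Jensen's inequality while keeping $C$ uniform in $i$, together with the non-explosion fact $\mathbb{P}(\hat{\tau}_{D_i}\ge\tau)\to 1$.
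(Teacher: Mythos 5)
Your proposal is correct and follows essentially the same route as the paper: apply Theorem~\ref{th:ihkform}, discard the non-negative cut-locus contribution, bound $d\mathbf{A}$ via the Laplacian comparison from \cite{PAPERONE}, apply Jensen's inequality and then control the resulting drift integral uniformly in $i$ and $t$ with Theorem~\ref{th:secradmomthmbr} and Corollary~\ref{cor:frmebr}. The only implementation difference is that you apply Jensen conditionally on the survival event $\{t<\hat{\tau}_{D_i}\}$, whereas the paper uses the algebraic identity $\mathbf{1}_A e^{-Y}=\mathbf{1}_A-1+e^{-\mathbf{1}_A Y}$ before applying Jensen to $e^{-\mathbf{1}_A Y}$; both give the same limit once $\mathbb{Q}_t\{t<\hat{\tau}_{D_i}\}\to 1$, and your factor-of-two loss from $r_N\le 1+r_N^2$ (instead of $r_N\le\tfrac12(1+r_N^2)$) is immaterial to the stated bound.
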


\begin{proof}
By \cite[Theorem~1]{PAPERONE}, the curvature conditions imply
\begin{equation}\label{eq:ddrest}
\frac{\partial}{\partial r_N}\log \Theta_N \leq \alpha + \beta r_N
\end{equation}
with $\alpha:= \dimn \Lambda + (\dimm-1)C_1$ and $\beta:= (\dimm-1)C_2$. Using the process $\hat{X}(x)$ constructed in Subsection \ref{ss:lapassump} it follows from this, Theorem \ref{th:ihkform} and the fact that $\mathbb{L}^{\Cut(N)}(\hat{X}(x))$ is non-decreasing that
\begin{equation}
p^M_T(x,N)\geq q_T(x,N) \lim_{i\uparrow \infty} \lim_{t\uparrow T} \mathbb{E}\left[\mathbf{1}_{\lbrace t < \hat{\tau}_{D_i}\rbrace}\exp\left[-\int_0^t \frac{f(\hat{X}_s(x))}{T-s}ds\right]\right]
\end{equation}
where $f := \frac{1}{2}\left(\alpha r_N+\beta r_N^2\right)$. For $t \in \left[0,T\right)$ we see that
\begin{equation}
\begin{split}
\mathbf{1}_{\lbrace t < \hat{\tau}_{D_i}\rbrace}\exp\left[-\int_0^t\frac{f(\hat{X}_s(x))}{T-s}ds\right]&= \mathbf{1}_{\lbrace t < \hat{\tau}_{D_i}\rbrace} \sum_{\mom=0}^{\infty} \frac{{\left(-\int_0^t \frac{f(\hat{X}_s(x))}{T-s}ds\right)^\mom}}{\mom!}\\
&= \mathbf{1}_{\lbrace t < \hat{\tau}_{D_i}\rbrace} -1 + \sum_{\mom=0}^{\infty} \frac{\left(-\mathbf{1}_{\lbrace t < \hat{\tau}_{D_i}\rbrace}\int_0^t \frac{f(\hat{X}_s(x))}{T-s}ds\right)^\mom}{\mom!}\\
&= \mathbf{1}_{\lbrace t < \hat{\tau}_{D_i}\rbrace} -1 +\exp\left[-\mathbf{1}_{\lbrace t < \hat{\tau}_{D_i}\rbrace}\int_0^t \frac{f(\hat{X}_s(x))}{T-s}ds\right]
\end{split}
\end{equation}
from which it follows, by Jensen's inequality, that
\begin{equation}\label{eq:lbineqpf}
\begin{split}
& \quad\text{ }\mathbb{E}\left[\mathbf{1}_{\lbrace t < \hat{\tau}_{D_i}\rbrace}\exp\left[-\int_0^t\frac{f(\hat{X}_s(x))}{T-s}ds\right]\right]\\
&\quad\quad\quad\quad\quad \geq \mathbb{Q}_t \lbrace t < \hat{\tau}_{D_i}\rbrace-1 + \exp\left[-\mathbb{E}\left[\mathbf{1}_{\lbrace t < \hat{\tau}_{D_i}\rbrace}\int_0^t\frac{f(\hat{X}_s(x))}{T-s}ds \right]\right].
\end{split}
\end{equation}
Inequality \eqref{eq:ddrest} implies by formula \eqref{eq:regE} that inequality \eqref{eq:triineq} holds on $M\setminus \Cut(N)$ with $\nu = \dimm-\dimn+\frac{\alpha}{2}$ and $\lambda = \frac{\alpha}{2}+\beta$ and so, by Theorem \ref{th:secradmomthmbr}, we see that
\begin{equation}
\begin{split}
\mathbb{E} \left[\mathbf{1}_{\lbrace t<\hat{\tau}_{D_i}\rbrace}\int_0^t \frac{r_N^2 (\hat{X}_s(x))}{T-s}ds\right] &\leq \int_0^t \frac{\mathbb{E}[\mathbf{1}_{\lbrace s<\hat{\tau}_{D_i}\rbrace}r_N^2(\hat{X}_s(x))]}{T-s}ds\\
&\leq \left(r_N^2(x)+\nu T\right)e^{\lambda T}
\end{split}
\end{equation}
for all $t \in \left[0,T\right)$. Similarly, by Corollary \ref{cor:frmebr}, we have
\begin{equation}
\begin{split}
\mathbb{E} \left[\mathbf{1}_{\lbrace t<\hat{\tau}_{D_i}\rbrace}\int_0^t \frac{r_N (\hat{X}_s(x))}{T-s}ds\right] &\leq \int_0^t \frac{\mathbb{E}[\mathbf{1}_{\lbrace s<\hat{\tau}_{D_i}\rbrace}r_N(\hat{X}_s(x))]}{T-s}ds \\
&\leq \left(\frac{r_N^2 (x) + \nu t}{T}\right)^{\frac{1}{2}}e^{\frac{\lambda t}{2}} \int_0^t \left(T-s\right)^{-\frac{1}{2}}ds  \\
&= 2 \left(r_N^2 (x) + \nu t\right)^{\frac{1}{2}} e^{\frac{\lambda t}{2}} \left(1- \left(\frac{T-t}{T}\right)^{\frac{1}{2}}\right)\\
&\leq 2 \left(r_N^2 (x) + \nu T\right)^{\frac{1}{2}} e^{\frac{\lambda T}{2}}
\end{split}
\end{equation}
for all $t \in \left[0,T\right)$. Consequently
\begin{equation}
\mathbb{E}\left[\mathbf{1}_{\lbrace t < \hat{\tau}_{D_i}\rbrace}\int_0^t\frac{f(\hat{X}_s)}{T-s}ds \right]\leq \alpha \left(r_N^2(x)+\nu T\right)^{\frac{1}{2}}e^{\frac{\lambda T}{2}} + \frac{\beta}{2}\left(r_N^2(x)+\nu T\right)e^{\lambda T}
\end{equation}
for all $t \in \left[0,T\right)$.
Furthermore
\begin{equation}
\lim_{i\uparrow \infty} \lim_{t\uparrow T}\mathbb{Q}_t \lbrace t < \hat{\tau}_{D_i}\rbrace = 1,
\end{equation}
by the bridge property and the monotone convergence theorem, so we have a lower bound
\begin{equation}
\begin{split}
p_T^M (x,N)&\geq q_T(x,N)\exp\left[-\alpha \left(r_N^2(x)+\nu T\right)^{\frac{1}{2}}e^{\frac{\lambda T}{2}}-\frac{\beta}{2}\left(r_N^2(x)+\nu T\right)e^{\lambda T}\right]\\
&= q_T(x,N)\exp\left[-(\dimn \Lambda + (\dimm-1)C_1) \left(r_N^2(x)+\nu T\right)^{\frac{1}{2}}e^{\frac{\lambda T}{2}}\right. \\
&\quad\quad\quad\quad\quad\quad\quad\quad\quad\quad\quad\quad\quad\quad\quad \left.-\frac{(\dimm-1)C_2}{2}\left(r_N^2(x)+\nu T\right)e^{\lambda T}\right]
\end{split}
\end{equation}
for all $T>0$ and $x \in M$, from which the result follows.
\end{proof}

Since the heat kernel is a positive fundamental solution to the heat equation, fixing one of the spatial variables for small times results in densities whose mass is localized around that fixed point. Riemannian manifolds are locally Euclidean, so for these small times the resulting densities should be comparable to the Gauss-Weierstrass kernel \eqref{eq:euclideanhk}. The precise sense in which this is true is given by Varadhan's asymptotic relation, originally proved in \cite{MR0217881,MR0208191}, which states for the minimal heat kernel $p^M$ of a complete Riemannian manifold $M$ that
\begin{equation}\label{eq:varadrelpt}
\lim_{t\downarrow 0} t \log p^M_t(x,y) = -\frac{d^2(x,y)}{2}
\end{equation}
uniformly on compact subsets of $M\times M$. Similarly, the embedding of $N$ in $M$ is locally diffeomorphic to an affine embedding of $\mathbb{R}^\dimn$ in $\mathbb{R}^\dimm$ so for small times the integrated heat kernel $p^M_\cdot(\cdot,N)$ should be comparable to the kernel $q_\cdot(\cdot,N)$ defined by equation \eqref{eq:qdefn}.

\begin{theorem}\label{th:genvar}
Suppose that $M$ is a complete and connected Riemannian manifold of dimension $\dimm$ and that $N$ is a compactly embedded submanifold of $M$ of dimension $\dimn \in \lbrace 0,\ldots,\dimm-1\rbrace$. Then
\begin{equation}\label{eq:varadrel}
\lim_{t\downarrow 0} t\log p^M_t(x,N)= -\frac{d^2(x,N)}{2}
\end{equation}
uniformly on compact subsets of $M$.
\end{theorem}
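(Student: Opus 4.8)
The plan is to deduce the statement from the pointwise Varadhan relation \eqref{eq:varadrelpt}, which holds uniformly on compact subsets of $M\times M$, together with the compactness of $N$. Fix a compact set $K\subseteq M$; it suffices to show that $t\log p^M_t(x,N)\to-\tfrac12 d^2(x,N)$ uniformly for $x\in K$, and this will follow from matching asymptotic upper and lower bounds, each uniform on $K$. Observe that $K\times N$ is a compact subset of $M\times M$, so \eqref{eq:varadrelpt} holds uniformly over $(x,y)\in K\times N$, and that for each $x$ the infimum $d(x,N)=\inf_{y\in N}d(x,y)$ is attained at some $y_x\in N$ because $N$ is compact.

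For the upper bound, fix $\epsilon>0$ and choose, by \eqref{eq:varadrelpt}, some $t_0>0$ with $p^M_t(x,y)\le\exp\!\left(\tfrac1t\left(-\tfrac12 d^2(x,y)+\epsilon\right)\right)$ for all $t\in(0,t_0)$, $x\in K$ and $y\in N$. Integrating over $N$ and using $\inf_{y\in N}d^2(x,y)=d^2(x,N)$ gives
\[ p^M_t(x,N)=\int_N p^M_t(x,y)\,d\vol_N(y)\le\vol_N(N)\exp\!\left(\tfrac1t\left(-\tfrac12 d^2(x,N)+\epsilon\right)\right), \]
so that $t\log p^M_t(x,N)\le t\log\vol_N(N)-\tfrac12 d^2(x,N)+\epsilon$ for all $x\in K$ and $t\in(0,t_0)$; letting $t\downarrow0$ and then $\epsilon\downarrow0$ yields $\limsup_{t\downarrow0}t\log p^M_t(x,N)\le-\tfrac12 d^2(x,N)$, uniformly on $K$.

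For the lower bound I would first invoke the compactness of $N$ to obtain $\rho_0>0$ and $c_0>0$ such that every intrinsic geodesic ball $B^N_\rho(y)$ in $N$ satisfies $\vol_N(B^N_\rho(y))\ge c_0\rho^\dimn$ for all $y\in N$ and $\rho\le\rho_0$. Fix $\rho\le\rho_0$ and $\epsilon>0$. For $y\in B^N_\rho(y_x)$ one has $d(x,y)\le d(x,y_x)+d^N(y_x,y)\le d(x,N)+\rho$, since distance within $N$ dominates distance within $M$; and by the lower bound in \eqref{eq:varadrelpt} there is $t_1>0$ with $p^M_t(x,y)\ge\exp\!\left(\tfrac1t\left(-\tfrac12 d^2(x,y)-\epsilon\right)\right)$ for $t\in(0,t_1)$, $x\in K$ and $y\in N$. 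Restricting the integral defining $p^M_t(x,N)$ to $B^N_\rho(y_x)$ then gives
\[ p^M_t(x,N)\ge\int_{B^N_\rho(y_x)}p^M_t(x,y)\,d\vol_N(y)\ge c_0\rho^\dimn\exp\!\left(\tfrac1t\left(-\tfrac12(d(x,N)+\rho)^2-\epsilon\right)\right), \]
whence $\liminf_{t\downarrow0}t\log p^M_t(x,N)\ge-\tfrac12(d(x,N)+\rho)^2-\epsilon$ uniformly on $K$; letting $\epsilon\downarrow0$ and then $\rho\downarrow0$ produces the matching lower bound. Since $x\mapsto d^2(x,N)$ is continuous and $K$ is compact, combining the two bounds gives uniform convergence on $K$, and as $K$ was arbitrary the theorem follows.

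The only delicate point is uniformity, and it is a mild one. The threshold times $t_0$ and $t_1$ must not depend on $(x,y)$, which is precisely guaranteed by applying the uniform form of \eqref{eq:varadrelpt} to the single compact set $K\times N$; and the volume lower bound $c_0\rho^\dimn$ for small geodesic balls in $N$ must hold with constants independent of the centre $y_x$, which is where compactness of $N$ (equivalently, its bounded geometry) is used. I emphasize that no stochastic completeness or curvature hypothesis on $M$ is required here, since \eqref{eq:varadrelpt} is available for the minimal heat kernel of any complete Riemannian manifold.
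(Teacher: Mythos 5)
Your proof is correct and takes a genuinely different, more elementary route than the paper's. The paper handles the lower bound by first reducing to the compact case (where it invokes the quantitative lower bound of Theorem \ref{th:quadcurvlb}, itself proved via the Fermi bridge and Jacobi field estimates) and then, for non-compact $M$, by doubling a regular domain containing all minimizing geodesics from $K$ to $N$ and appealing to the principle of not feeling the boundary, with careful comparison of distance functions on the doubled manifold. You avoid all of this: you obtain the matching lower bound directly by localizing the integral to an intrinsic geodesic ball $B^N_\rho(y_x)$ around a nearest point $y_x$, using the uniform volume lower bound $\vol_N(B^N_\rho(y))\geq c_0\rho^{\dimn}$ (which holds by compactness/bounded geometry of $N$, and degenerates harmlessly to a point mass when $\dimn=0$) together with the uniform form of the pointwise Varadhan relation on the single compact set $K\times N$. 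Your approach requires nothing beyond \eqref{eq:varadrelpt} and compactness of $N$, and the uniformity bookkeeping is sound: the error terms $t\log\vol_N(N)$, $t\log(c_0\rho^{\dimn})$ and $\rho\sup_{x\in K}d(x,N)+\tfrac12\rho^2$ all vanish uniformly in $x\in K$ as $t\downarrow 0$ and $\rho\downarrow 0$. What the paper's route buys is that it doubles as a sanity check on the Fermi-bridge lower bound machinery; what yours buys is brevity and fewer moving parts, and it is closer in spirit to the alternatives (via Ndumu's expansion, or the Hino--Ram\'{i}rez framework) that the paper mentions in passing after the proof.
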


\begin{proof}
It is a simple matter to show that the left-hand side of \eqref{eq:varadrel} is less than or equal to the right-hand side, using Varadhan's relation and the fact that $r_N(x)\leq r_y(x)$ for all $y \in N$. To prove the opposite inequality assume first that $M$ is compact. Then the result follows immediately from Theorem \ref{th:quadcurvlb}. So assume that $M$ is non-compact, let $K$ be any compact subset of $M$ and for $x \in K$ and $y \in N$ denote by $\Gamma_{x,y}$ the set of all length-minimizing geodesic segments between $x$ and $y$, viewed as a subset of $M$. Then $\Gamma_{x,y}$ contains (the image of) at least one such geodesic and by the triangle inequality the set
\begin{equation}
\Gamma_{K,N}:=\bigcup_{x\in K,y \in N} \Gamma_{x,y}
\end{equation}
is a bounded subset of $M$. Now let $D$ be any regular domain in $M$ containing $\Gamma_{K,N}$. Modify $M$ outside of $D$ so as to obtain a compact Riemannian manifold $M_D$ (by doubling, for example) and suppose that $D$ is sufficiently large so that
\begin{equation}
\lim_{t\downarrow 0} \frac{p^D_t(x,y)}{p^{M_D}_t(x,y)}=1
\end{equation}
uniformly for $x\in K$ and $y \in N$. This is the principle of \textit{not feeling the boundary} considered by Hsu in \cite{MR1325580}. Such $D$ can always be found, as explained by Norris in \cite{MR1484769}, since we are assuming that $M$ is non-compact. Then for all $\epsilon >0$ there exists $t_{\epsilon,K} >0$ such that for $t\in (0,t_{\epsilon,K})$ we have
\begin{equation}
(1-\epsilon) p^{M_D}_t(x,N) \leq p^D_t(x,N) \leq p^M_t(x,N)
\end{equation}
for all $x \in K$. It follows from this and the result in the compact case that
\begin{equation}
\lim_{t \downarrow 0} t \log p^M_t(x,N) \geq -\frac{d_{M_D}^2(x,N)}{2}
\end{equation}
where $d_{M_D}$ denotes the distance function on $M_D$. But since $\Gamma_{K,N}$ is contained in $D$ it follows that $x\in K$ and $y \in N$ implies $d_{M_D}(x,y) \leq d(x,y)$. Therefore $d_{M_D}(x,N) \leq d(x,N)$ and the result follows.
\end{proof}

%The convergence in \eqref{eq:varadrel}, while uniform over $x$ in compacts sets, is uniform over any bounded collection of submanifolds of the same dimension for which there exists a uniform bound on volume and principle curvature.

\begin{example}
Suppose that $D$ is a regular domain. Then $\partial D$ is a compactly embedded hypersurface and, according to \cite[Theorem~5.2.6]{MR1882015}, one has
\begin{equation}
\lim_{t\downarrow0}t\log \mathbb{P}\lbrace \tau_D(x)<t\rbrace = -\frac{d^2(x,\partial D)}{2}
\end{equation}
for all $x \in D$, where $\tau_D(x)$ denotes the first exit time from $D$ of a Brownian motion $X(x)$ starting at $x$. On the other hand, according to \cite[Theorem~1.2]{MR1484769}, one has
\begin{equation}
\lim_{t\downarrow0}t\log p^M_t(x,D,x) = -\frac{d^2(x,\partial D)}{2}
\end{equation}
for all $x\not\in D$, where $p^M_t(x,D,y):=p^M_t(x,y)-p^{M\setminus \overline{D}}_t(x,y)$ is the measure of heat passing through $D$. Theorem \ref{th:genvar} and formula \eqref{eq:revuzform} imply that, in either case, we have
\begin{equation}
\lim_{t\downarrow0}t\log \frac{d}{dt}\mathbb{E}\left[L^{\partial D}_t(X(x))\right] = -\frac{d^2(x,\partial D)}{2}.
\end{equation}
\end{example}

\begin{example}
If $TM$ is equipped with the Sasaki metric \cite{MR0112152,MR0145456} then Theorem \ref{th:genvar} implies
\begin{equation}
\lim_{t\downarrow0}t\log p^{TM}_t(\xi,M) = -\frac{\|\xi\|^2}{2}
\end{equation}
uniformly for $\xi$ in compact subsets of $TM$.
\end{example}

One could also prove Theorem \ref{th:genvar} using Ndumu's asymptotic expansion \cite{MR2786702}. While this expansion is only valid away from the cut locus, it could be used in place of our lower bounds by connecting points in $K$ to $N$ with smooth curves, covering them with small balls and invoking the Markov property.

Alternatively, it was proved by Hino and Ram\'{i}rez in \cite{MR1988472}, in the context of Dirichlet spaces, that
\begin{equation}
\lim_{t\downarrow 0} t \log \mathbb{P}\lbrace X_0 \in A;X_t \in B\rbrace  = -\frac{d^2(A,B)}{2}
\end{equation}
for all measurable sets $A$ and $B$ of positive measure, where $X$ denotes the Markov process associated with the underlying local regular Dirichlet form and where $d$ is an associated intrinsic distance. Using Sturm's upper bounds \cite{MR1355744} and pointwise lower bounds, one can deduce from this the pointwise relation of Varadhan, as shown by Ram\'{i}rez \cite[Theorem~4.1]{MR1809739}. A modification of this approach, replacing balls with tubular neighbourhoods and the pointwise lower bounds with our integrated heat kernel lower bounds, might also be used to deduce a relation similar to \eqref{eq:varadrel}.

\subsection{Obtaining an Upper Bound from the Lower Bound}\label{oubflb}

Grigor'yan Hu and Lau showed in \cite{MR2388658} how Gaussian upper bounds on the heat kernel can be obtained from lower bounds. While lower bounds are frequently obtained from upper bounds, as in the method of Aronson \cite{MR0217444}, this was the first result to go in the other direction. Having proved a lower bound on the integrated heat kernel, and since deducing an upper bound directly from Theorem \ref{th:maintheorem} seems hard, it is natural for us to use this approach to obtain an upper bound. To do so, we require the following definition.
\begin{definition}\label{eq:lwrreg}
We say that $\vol_M$ is \textnormal{lower regular} if there exist constants $T_0,C>0$ such that
\begin{equation}
V_r(x) \geq C r^\dimm
\end{equation}
for all $x\in M$ and $0<r<\sqrt{T_0}$.
\end{definition}
For example, if the injectivity radius of $M$ is positive and the Ricci curvature is bounded above by a constant then $\vol_M$ is lower regular. For geometric conditions which imply a positive injectivity radius, see \cite{MR658471}. Now assume that $\vol_M$ is lower regular and that the Ricci curvature of $M$ is bounded below, by a constant. Then the lower bound of Theorem \ref{th:quadcurvlb} implies a \textit{near-diagonal lower estimate} of the form
\begin{equation}
p_t^M(x,y) \geq  C't^{-\frac{\dimm}{2}}
\end{equation}
for all $0<t<T_0$ and $x,y\in M$ satisfying $d(x,y)< t^\frac{1}{2}$. This implies, by \cite[Corollary~3.5]{MR2388658}, that there exist constants $\gcnpr,\gausscon>0$ (which might depend upon $T_0$) such that
\begin{equation}\label{eq:heatkernelUB}
p_t^M(x,y) \leq \gcnpr t^{-\frac{\dimm}{2}}\exp\left[-\frac{d^2(x,y)}{\gausscon t}\right]
\end{equation}
for all $t \in (0,T_0)$ and $x,y \in M$. We therefore have the following theorem, by observing that $y \in N$ implies $r_y(x)\geq r_N(x)$.
\begin{theorem}\label{th:grigub}
Suppose that the Ricci curvature of $M$ is bounded below by a constant, that $\vol_M$ is lower regular and that $N$ is compact. Then there exist constants $\gcnpr,\gausscon>0$ (which might depend upon $T_0$) such that
\begin{equation}
p^M_t(x,N)\leq \gcnpr t^{-\frac{\dimm}{2}} \exp\left[-\frac{r_N^2(x)}{\gausscon t}\right] \nonumber
\end{equation}
for all $x \in M$ and $t\in (0,T_0)$.
\end{theorem}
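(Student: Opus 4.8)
The plan is to integrate the pointwise Gaussian upper bound \eqref{eq:heatkernelUB} over $N$; all the substantive work has already been done upstream, and what remains is elementary. As established in the discussion preceding the theorem, the hypotheses that the Ricci curvature of $M$ is bounded below by a constant and that $\vol_M$ is lower regular combine with the integrated lower bound of Theorem \ref{th:quadcurvlb} to yield a near-diagonal lower estimate, and hence, by \cite[Corollary~3.5]{MR2388658}, constants $\gcnpr,\gausscon>0$ (possibly depending on $T_0$) for which
\begin{equation}
p^M_t(x,y) \leq \gcnpr t^{-\frac{\dimm}{2}}\exp\left[-\frac{d^2(x,y)}{\gausscon t}\right]\nonumber
\end{equation}
holds for all $x,y\in M$ and $t\in(0,T_0)$. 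This is the only input needed.

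First I would insert this bound into the defining formula $p^M_t(x,N)=\int_N p^M_t(x,y)\,d\vol_N(y)$ of \eqref{eq:ihkintro}, giving
\begin{equation}
p^M_t(x,N) \leq \gcnpr t^{-\frac{\dimm}{2}}\int_N \exp\left[-\frac{d^2(x,y)}{\gausscon t}\right]d\vol_N(y).\nonumber
\end{equation}
Next, since $d(x,y)\geq d(x,N)=r_N(x)$ for every $y\in N$, the exponential factor is bounded above by $\exp[-r_N^2(x)/(\gausscon t)]$, which is independent of $y$ and may be taken outside the integral. Finally, since $N$ is compact we have $\vol_N(N)<\infty$, so
\begin{equation}
p^M_t(x,N) \leq \gcnpr\,\vol_N(N)\, t^{-\frac{\dimm}{2}}\exp\left[-\frac{r_N^2(x)}{\gausscon t}\right],\nonumber
\end{equation}
and relabelling the constant $\gcnpr\,\vol_N(N)$ as $\gcnpr$ (now possibly depending on $N$ as well as $T_0$) yields the asserted inequality for all $x\in M$ and $t\in(0,T_0)$, including the degenerate case $x\in N$, where $r_N(x)=0$.

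There is no real obstacle at this final stage: the difficulty lies entirely in the preceding steps, namely proving the integrated lower bound (Theorem \ref{th:quadcurvlb}) and invoking Grigor'yan, Hu and Lau's mechanism that converts near-diagonal lower estimates into global Gaussian upper estimates. The only point requiring a moment's care is that one should not expect to retain the constants of \eqref{eq:heatkernelUB} verbatim, since $\vol_N(N)$ enters $\gcnpr$, and that one makes no attempt to keep the finer Gaussian decay carried by $d^2(x,y)$ beyond the crude bound $d(x,y)\geq r_N(x)$, which is already sufficient for the stated conclusion.
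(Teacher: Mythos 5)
Your proof is correct and follows the paper's argument essentially verbatim: the Grigor'yan--Hu--Lau mechanism is invoked in the paragraph preceding the theorem to produce the pointwise Gaussian upper bound \eqref{eq:heatkernelUB}, and the paper then deduces the theorem exactly as you do, by observing that $y\in N$ implies $r_y(x)\geq r_N(x)$ and absorbing the finite factor $\vol_N(N)$ into the constant.
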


\section{Gradient and Hessian Estimates}\label{sec:deriv}

We are now in a position to prove gradient and Hessian estimates. They are global and hence require assumptions on curvature. For comparison, recall the derivative estimates of Cheng, Li and Yau \cite{MR630777}. They proved, in particular, that if the injectivity radius of $M$ is positive and there exist constants $A_i$ which bound the $i$\textsuperscript{th} covariant derivatives of the curvature tensor then for all $T>0$ there exist positive constants $\alpha(\dimm)$ and $C(\dimm,A_0,\ldots,A_{l-1},T)$ such that
\begin{equation}
\| \nabla^l p^M_t(\cdot,y)_x\| \leq C(\dimm,A_0,\ldots,A_{l-1},T) t^{-\frac{(\dimm+l)}{2}} \exp\left[-\frac{\alpha(\dimm)d^2(x,y)}{t}\right]
\end{equation}
for all $x,y \in M$ and $t \in \left[0,T\right]$. Our estimates will be on the \textit{logarithmic} derivatives of the \textit{integrated} heat kernel. They will be derived using a slight modification of the method used by Stroock in \cite{MR1715265}. We will show precisely how the gradient estimate is derived, whereas for the Hessian estimate we will refer to \cite{MR1715265} for the details. In particular, to prove the gradient estimate we will need Theorem \ref{th:quadcurvlb} and a formula for the derivative of the heat semigroup, which we will now state.

Assume that the Ricci curvature of $M$ is bounded below, denote by $X(x)$ a Brownian motion on $M$ starting at $x$ and denote by $U$ a horizontal lift of $X(x)$ with antidevelopment $B$. Denote also by $\lbrace \mathcal{Q}_s:s\geq 0\rbrace$ the solution the ordinary differential equation
\begin{equation}\label{eq:RicODE}
\begin{cases}
\dot{\mathcal{Q}}_s = -\frac{1}{2}\Ric_{U_s} \mathcal{Q}_s\\
\mathcal{Q}_0 = U_0^{-1}
\end{cases}
\end{equation}
where $\Ric_{U_s} := U_s^{-1} \Ric^{\sharp} U_s$. Then for any bounded measurable function $f:M\rightarrow \mathbb{R}$ there is the formula
\begin{equation}\label{eq:formder}
d(P_t f)_x = \mathbb{E}\left[f(X_t(x))\frac{1}{t}\int_0^t\langle \mathcal{Q}_s,dB_s\rangle\right]
\end{equation}
for all $t>0$. This differentiation formula was proved by Elworthy and Li in \cite{MR1297021} and by Li in \cite{XMLTHESIS}, using basic stochastic calculus, while another, related, approach was given by Thalmaier in \cite{MR1488139}. It was proved originally by Bismut in \cite{MR755001} for the case in which $M$ is compact. An analogous formula was used by Stroock, whose estimate is based upon the following lemma, proved as \cite[Lemma~6.45]{MR1715265} using Jensen's inequality.

\begin{lemma}\label{lem:stroocklem}
Suppose $(\Omega,\mathcal{F},\mathbb{P})$ is a probability space and $\phi$ a non-negative measurable function on $\Omega$ with $\mathbb{E}\left[\phi\right]=1$. If $\phi$ is a measurable function on $\Omega$ such that $\phi \psi$ is integrable, then $\mathbb{E}\left[\phi \psi \right] \leq \mathbb{E}\left[\phi \log \phi\right]+\log \mathbb{E}[e^\psi]$.
\end{lemma}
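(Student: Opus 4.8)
The plan is to reduce the statement to the elementary Fenchel-type inequality
\[
ab \;\leq\; a\log a - a + e^{b}, \qquad a\geq 0,\ b\in\mathbb{R},
\]
with the convention $0\log 0=0$; this is immediate once one observes that $\sup_{b\in\mathbb{R}}(ab-e^{b})=a\log a-a$ for $a>0$ (the supremum being attained at $b=\log a$), and that the case $a=0$ is trivial since the left-hand side vanishes while $e^{b}>0$. First I would dispose of the degenerate cases: if $\mathbb{E}[e^{\psi}]=+\infty$ the asserted right-hand side is $+\infty$ and there is nothing to prove, and since $t\log t\geq -e^{-1}$ for $t\geq 0$ the quantity $\mathbb{E}[\phi\log\phi]$ is well-defined in $(-\infty,+\infty]$, so we may also assume it is finite (otherwise, again, there is nothing to prove).

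Assuming $0<\mathbb{E}[e^{\psi}]<\infty$, write $c:=\log\mathbb{E}[e^{\psi}]$ and apply the elementary inequality pointwise with $a=\phi$ and $b=\psi-c$, which gives
\[
\phi\,(\psi-c)\;\leq\;\phi\log\phi-\phi+\frac{e^{\psi}}{\mathbb{E}[e^{\psi}]}
\]
on $\Omega$, where on $\{\phi=0\}$ this reduces to $0\leq e^{\psi}/\mathbb{E}[e^{\psi}]$ under the convention $0\log 0=0$. Integrating with respect to $\mathbb{P}$ and using $\mathbb{E}[\phi]=1$ together with $\mathbb{E}\big[e^{\psi}/\mathbb{E}[e^{\psi}]\big]=1$ collapses the right-hand side to $\mathbb{E}[\phi\log\phi]$, while the left-hand side equals $\mathbb{E}[\phi\psi]-c$. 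Rearranging yields $\mathbb{E}[\phi\psi]\leq\mathbb{E}[\phi\log\phi]+c$, which is the claim. The hypothesis that $\phi\psi$ is integrable is exactly what is needed for the left-hand integral to be finite, and all other terms are integrable under the reductions above, so every manipulation is legitimate.

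I do not expect any genuine obstacle; the statement is the standard entropy (Gibbs variational) inequality, and the only bookkeeping is the consistent use of $0\log 0=0$ on $\{\phi=0\}$. An equivalent route, matching the Jensen-based argument of \cite[Lemma~6.45]{MR1715265}, is to set $d\mathbb{Q}=\phi\,d\mathbb{P}$ and $d\widetilde{\mathbb{P}}=e^{\psi}\,d\mathbb{P}/\mathbb{E}[e^{\psi}]$, note that $\psi-c=\log(d\widetilde{\mathbb{P}}/d\mathbb{P})$, and apply Jensen's inequality to the concave function $\log$ under $\mathbb{Q}$ to obtain $\mathbb{E}_{\mathbb{Q}}\big[\log(d\widetilde{\mathbb{P}}/d\mathbb{Q})\big]\leq\log\mathbb{E}_{\mathbb{Q}}\big[d\widetilde{\mathbb{P}}/d\mathbb{Q}\big]\leq 0$; splitting the logarithm as $\log(d\widetilde{\mathbb{P}}/d\mathbb{P})-\log(d\mathbb{Q}/d\mathbb{P})$ and rearranging recovers the same inequality.
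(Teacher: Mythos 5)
Your proposal is correct. The paper does not reprove the lemma, merely citing it as Lemma~6.45 of Stroock's book \cite{MR1715265} and noting it is proved there "using Jensen's inequality." Your primary route is genuinely different: you reduce the claim to the pointwise Young/Fenchel inequality $ab \leq a\log a - a + e^b$ (the Legendre duality between $t\mapsto e^t$ and $s\mapsto s\log s - s$), applied with $a=\phi$ and $b=\psi-\log\mathbb{E}[e^\psi]$, and then integrate. This is a purely pointwise argument that avoids any change of measure and any appeal to Jensen, and it handles the $\{\phi=0\}$ set cleanly via the convention $0\log 0 = 0$. What it buys is elementariness and explicitness; what the Jensen route buys is conceptual transparency — it makes visible that the inequality is the Gibbs variational formula, i.e.\ nonnegativity of relative entropy between $\phi\,d\mathbb{P}$ and the tilted measure $e^\psi\,d\mathbb{P}/\mathbb{E}[e^\psi]$. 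Your second paragraph sketches exactly this change-of-measure argument, so you have in fact recovered the cited proof as well. Your treatment of the degenerate cases ($\mathbb{E}[e^\psi]=\infty$ or $\mathbb{E}[\phi\log\phi]=\infty$) is correct and necessary for a complete argument; note only that $\mathbb{E}[e^\psi]>0$ holds automatically since $e^\psi>0$ on a probability space, so only the upper degeneracy needs comment. One last remark: the statement as printed has a typo, writing "$\phi$ is a measurable function" where it should read "$\psi$ is a measurable function" — your proof implicitly corrects this.
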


\begin{theorem}\label{th:derivativeestiamte}
Suppose that $M$ is a complete and connected Riemannian manifold of dimension $\dimm$ and that $N$ is a compactly embedded submanifold of $M$ of dimension $\dimn \in \lbrace 0, \ldots,\dimm-1\rbrace$. Suppose that the injectivity radius of $M$ is positive and that the curvature tensor is bounded. Then for all $T>0$ there exists a constant $C>0$ such that
\begin{equation}\label{eq:gradbnd}
\| \nabla \log p^M_t(\cdot,N)_x\|^2 \leq C\left(\frac{1}{t}+\frac{\dimn}{t}\log\frac{1}{t}+\frac{d^2(x,N)}{t^2}\right)
\end{equation}
for all $x \in M$ and $t \in \left(0,T\right]$.
\end{theorem}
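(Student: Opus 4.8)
The plan is to follow Stroock's method \cite{MR1715265}: combine the Bismut--Elworthy--Li derivative formula \eqref{eq:formder} with the logarithmic Sobolev-type inequality of Lemma \ref{lem:stroocklem}, fed by the two-sided bounds on the integrated heat kernel supplied by Theorems \ref{th:quadcurvlb} and \ref{th:grigub}. First I would check that the standing hypotheses place us within the scope of all three ingredients: positive injectivity radius and bounded curvature tensor imply that the Ricci curvature is bounded, so $M$ is stochastically complete and \eqref{eq:formder} holds; that the sectional curvatures are bounded below, so hypothesis (C1) of Theorem \ref{th:quadcurvlb} is satisfied with $C_2=0$ and $\Lambda$ a bound on the principal curvatures of the compact submanifold $N$; and that $\vol_M$ is lower regular, so Theorem \ref{th:grigub} applies. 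It is enough to establish \eqref{eq:gradbnd} for $t$ in some interval $(0,t_0]$ on which the bracketed terms are non-negative, since for $t\in[t_0,T]$ the argument below, run with $t_0/2$ in place of $t/2$, produces a bound $\|\nabla\log p^M_t(\cdot,N)_x\|^2\le C(1+d^2(x,N))$ of the required shape.

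For the main range, fix $x\in M$ and $t\in(0,t_0]$. By Chapman--Kolmogorov $p^M_t(\cdot,N)=P_{t/2}\bigl[p^M_{t/2}(\cdot,N)\bigr]$, and $p^M_{t/2}(\cdot,N)$ is bounded by Theorem \ref{th:grigub}, so \eqref{eq:formder} applied to $f=p^M_{t/2}(\cdot,N)$ over a time interval of length $t/2$ gives, for every unit $v\in T_xM$,
\[
  \langle\nabla\log p^M_t(x,N),v\rangle=\mathbb{E}\!\left[\phi\,\frac{2}{t}\int_0^{t/2}\langle dB_s,\mathcal{Q}_s v\rangle\right],\qquad \phi:=\frac{p^M_{t/2}(X_{t/2}(x),N)}{p^M_t(x,N)},
\]
where $\phi\ge0$ and $\mathbb{E}[\phi]=1$. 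For $\lambda>0$ put $\psi_\lambda:=\lambda\,\frac{2}{t}\int_0^{t/2}\langle dB_s,\mathcal{Q}_s v\rangle$; then $\langle\nabla\log p^M_t(x,N),v\rangle=\lambda^{-1}\mathbb{E}[\phi\psi_\lambda]$, and Lemma \ref{lem:stroocklem} yields $\mathbb{E}[\phi\psi_\lambda]\le\mathbb{E}[\phi\log\phi]+\log\mathbb{E}[e^{\psi_\lambda}]$. Since the curvature bound forces $\|\mathcal{Q}_s\|\le e^{cs}$ through \eqref{eq:RicODE}, the quadratic variation of $\int_0^{t/2}\langle dB_s,\mathcal{Q}_s v\rangle$ is deterministically at most $\tfrac{t}{2}e^{cT}$, so the elementary exponential-martingale bound gives $\log\mathbb{E}[e^{\psi_\lambda}]\le A\lambda^2/t$ with a constant $A$ depending only on $T$ and the curvature bound. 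As the entropy $H:=\mathbb{E}[\phi\log\phi]$ is non-negative, minimising $\lambda^{-1}H+A\lambda t^{-1}$ over $\lambda$ and then maximising over $v$ gives $\|\nabla\log p^M_t(x,N)\|^2\le 4At^{-1}\mathbb{E}[\phi\log\phi]$.

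It then remains to estimate the entropy. Using $\mathbb{E}[\phi]=1$ to extract the constant, $\mathbb{E}[\phi\log\phi]=\mathbb{E}\bigl[\phi\log p^M_{t/2}(X_{t/2}(x),N)\bigr]-\log p^M_t(x,N)$. I would bound the first summand with the Gaussian upper bound of Theorem \ref{th:grigub}, discarding the favourable $-d^2(\cdot,N)$ term, to get $\mathbb{E}\bigl[\phi\log p^M_{t/2}(X_{t/2}(x),N)\bigr]\le C-\tfrac{m}{2}\log t$; and I would bound the second using the lower bound of Theorem \ref{th:quadcurvlb}, giving $-\log p^M_t(x,N)\le\tfrac{m-n}{2}\log t+\tfrac{d^2(x,N)}{2t}+C(1+d^2(x,N))$. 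Adding, the $\log t$ terms combine to $\tfrac{n}{2}\log\tfrac1t$, so $\mathbb{E}[\phi\log\phi]\le C+\tfrac{n}{2}\log\tfrac1t+\tfrac{d^2(x,N)}{2t}+C\,d^2(x,N)$; substituting into the previous bound and using $t\le T$ to absorb $d^2(x,N)/t$ into $d^2(x,N)/t^2$ yields \eqref{eq:gradbnd}, with a constant depending only on $T$, $m$, $n$, $\Lambda$ and the geometric bounds. The Hessian estimate is then obtained by carrying the same scheme one differentiation higher, following \cite{MR1715265}.

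The main obstacle is the entropy estimate, and specifically the alignment of the two heat kernel bounds: it is crucial that the lower bound of Theorem \ref{th:quadcurvlb} carries the prefactor $t^{-(m-n)/2}$ rather than $t^{-m/2}$, since the discrepancy between that and the $t^{-m/2}$ of the Gaussian upper bound is exactly what makes the logarithmic correction term in \eqref{eq:gradbnd} scale with $n$ and not with $m$. Obtaining that lower bound in the first place is what required the development of Sections \ref{sec:fbridges}--\ref{sec:lbar}; granting it, together with the Elworthy--Li formula, the remainder is essentially bookkeeping with Stroock's lemma.
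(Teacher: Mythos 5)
Your proposal is correct and follows essentially the same route as the paper's proof: Stroock's entropy lemma combined with the Elworthy--Li derivative formula, with the entropy term controlled by feeding in the lower bound of Theorem \ref{th:quadcurvlb} and the Gaussian upper bound of Theorem \ref{th:grigub} (the paper applies the derivative formula over $[0,t]$ to $f=p^M_t(\cdot,N)$ so that $P_tf=p^M_{2t}(\cdot,N)$, which is the same halving you make explicit by working on $[0,t/2]$). The only cosmetic difference is that the paper bounds the entropy crudely by $\sup_z\log\bigl(p^M_t(z,N)/p^M_{2t}(x,N)\bigr)$, whereas you decompose it into upper-bound and lower-bound contributions before taking the supremum, which amounts to the same thing.
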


\begin{proof}
For a bounded measurable positive function $f$ and $\gamma \in \mathbb{R}$ set
\begin{equation}
\phi := \frac{f(X_t(x))}{P_tf(x)},\quad \psi := \gamma \int_0^t \langle \mathcal{Q}_sv,dB_s\rangle
\end{equation}
and see, by Lemma \ref{lem:stroocklem} and formula \eqref{eq:formder}, that
\begin{equation}
\gamma t \frac{d(P_t f)_x}{P_tf(x)} \leq h_t(x;f)+\log \mathbb{E}\left[\exp\left[\gamma \int_0^t \langle \mathcal{Q}_s,dB_s\rangle \right]\right] \nonumber
\end{equation}
where
\begin{equation}\label{eq:htdefn}
h_t(x;f):= \mathbb{E}\left[\frac{f(X_t(x))}{P_tf(x)}\log \frac{f(X_t(x))}{P_tf(x)}\right].
\end{equation}
Furthermore, denoting by $R$ the minimum of the Ricci curvature, equation \eqref{eq:RicODE} implies
\begin{equation}
\log \mathbb{E}\left[\exp\left[\gamma \int_0^t \langle \mathcal{Q}_s v,dB_s\rangle \right]\right] \leq \frac{\gamma^2}{2}\int_0^t e^{-Rs}ds
\end{equation}
and so, after minimizing over $\gamma$, we deduce
\begin{equation}
\bigg\vert \frac{d(P_t f)_x}{P_tf(x)}\bigg\vert \leq \frac{1}{t}\left(2 h_t(x;f) \int_0^t e^{-Rs}ds\right)^{\frac{1}{2}}.\vspace{1 mm}
\end{equation}
Now we choose $f(\cdot)=p^M_t(\cdot,N)$. Then $P_tf(x) = p^M_{2t}(x,N)$, by Tonelli's theorem, and for all $z \in M$ it follows that
\begin{equation}\label{eq:asas}
h_t(x;p^M_t(\cdot,N)) \leq \sup_{z\in M} \log \left(\frac{p^M_t(z,N)}{p^M_{2t}(x,N)}\right).
\end{equation}
The curvature assumptions imply that there exist constants $C_1,C_2 \geq 0$ such that the sectional curvatures of planes containing the radial direction are bounded below by $-(C_1+C_2r_N)^2$ and so, by Theorem \ref{th:quadcurvlb}, there exists a constant $c_1\geq 0$, depending only on $T,C_1,C_2,\dimm$ and $\dimn$, such that
\begin{equation}\label{eq:lbforder}
p^M_{2t}(x,N) \geq (2t)^{-\frac{(\dimm-\dimn)}{2}}\exp\left[-\frac{r_N^2(x)}{4t}-c_1(1+r^2_N(x))\right]
\end{equation}
for all $x \in M$ and $t \in \left(0,T\right]$. The assumptions also imply, by Theorem \ref{th:grigub} and the Chapman-Kolmogorov equation, that there exists a constant $c_2>0$ such that
\begin{equation}\label{eq:ubforder}
p^M_t(z,N) \leq c_2t^{-\frac{\dimm}{2}}
\end{equation}
for all $t \in \left(0,T\right]$ and $z \in M$. Substituting the estimates \eqref{eq:lbforder} and \eqref{eq:ubforder} in to \eqref{eq:asas} proves the theorem.
\end{proof}

It follows that the gradient estimate \eqref{eq:gradbnd} holds automatically if $M$ is compact. Furthermore, we can now prove the semimartingale property promised in Subsection \ref{ss:smprop}. As pointed out by Thalmaier, the semimartingale property should hold without any assumptions on curvature. G\"{u}neysu proved this for the one point case using local estimates on the heat kernel. We do not yet have such estimates for the integrated heat kernel and so, for the time being, we make do with the assumptions of Theorem \ref{th:derivativeestiamte}.

\begin{corollary}
Under the assumptions of Theorem \ref{th:derivativeestiamte}, the coordinate process on the bridge space $L_{x;N,T}$ is a semimartingale with respect to the bridge measure $\mathbb{P}^{x;N,T}$.
\end{corollary}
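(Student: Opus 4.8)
The plan is to reduce the statement to the almost-sure integrability of the bridge drift up to the terminal time. By \eqref{eq:gentobas} and Girsanov's theorem, as recalled in Subsection \ref{ss:smprop}, under $\mathbb{P}^{x;N,T}$ the coordinate process is a diffusion on $[0,T)$ with generator $\tfrac12\triangle+\nabla\log p^M_{T-t}(\cdot,N)$, so that for every $f\in C^\infty(M)$ and every $t\in[0,T)$
\begin{equation}
f(X_t)=f(x)+m^f_t+\int_0^t\Big(\tfrac12\triangle f+\big\langle\nabla f,\nabla\log p^M_{T-u}(\cdot,N)\big\rangle\Big)(X_u)\,du,
\end{equation}
where $m^f$ is a continuous local martingale with $\langle m^f\rangle_t=\int_0^t\|\nabla f\|^2(X_u)\,du$. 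Since every path in $L_{x;N,T}(M)$ is continuous on the closed interval $[0,T]$ with $X_T\in N$, the image $X([0,T])$ is a (random) compact subset of $M$; hence $\triangle f$ and $\nabla f$ are bounded along the path, $\langle m^f\rangle_T<\infty$ almost surely, and $m^f$ extends to a continuous local martingale on $[0,T]$. Consequently, the remaining finite-variation part of $f(X)$ has total variation on $[0,T]$ at most a random constant times $\int_0^T\|\nabla\log p^M_{T-u}(\cdot,N)\|(X_u)\,du$, so $X$ is a semimartingale under $\mathbb{P}^{x;N,T}$ provided one shows
\begin{equation}\label{eq:driftfinite}
\int_0^T\big\|\nabla\log p^M_{T-u}(\cdot,N)\big\|(X_u)\,du<\infty\qquad\mathbb{P}^{x;N,T}\text{-a.s.}
\end{equation}

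To prove \eqref{eq:driftfinite} I would fix $T_0\in(0,T)$ small enough that $T-u\le1$ on $[T_0,T)$ and split the integral there. On $[0,T_0]$ the integrand is bounded along the path, since $u\mapsto X_u$ has compact image and $\nabla\log p^M_{T-u}(\cdot,N)$ is continuous in $(u,\cdot)$ for $u\le T_0<T$, so that contribution is finite pathwise. On $[T_0,T)$, the gradient estimate of Theorem \ref{th:derivativeestiamte} together with the subadditivity of the square root gives
\begin{equation}
\big\|\nabla\log p^M_{T-u}(\cdot,N)\big\|(X_u)\le\sqrt{C}\left(\frac{1}{\sqrt{T-u}}+\sqrt{\frac{\dimn}{T-u}\log\frac{1}{T-u}}+\frac{d(X_u,N)}{T-u}\right).
\end{equation}
The first two terms are deterministic and integrable on $(T_0,T)$: the substitution $v=T-u$ reduces the first to $\int_0^{\cdot}v^{-1/2}\,dv$ and, after the further substitution $v=e^{-w}$, the second to $\int^{\infty}e^{-w/2}w^{1/2}\,dw<\infty$. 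For the third term I would take $\mathbb{P}^{x;N,T}$-expectations and apply Tonelli's theorem together with the first radial moment bound $\mathbb{E}^{x;N,T}[d(X_u,N)]\le C'\sqrt{T-u}$, valid for $u$ close to $T$; this is the $\mom=1$ instance of the estimate established in the proof of Proposition \ref{prop:concencondthm} from Lemma \ref{lem:concrit} and \eqref{eq:gentobas}, and it applies because $N$ is compact and the hypotheses of Theorem \ref{th:derivativeestiamte} (bounded curvature, positive injectivity radius) ensure, via \cite{MR882426} and \cite{PAPERONE}, the Gaussian upper bound \eqref{eq:assone} and the Jacobian bound \eqref{eq:asstwo}. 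Thus $\mathbb{E}^{x;N,T}\int_{T_0}^T(T-u)^{-1}d(X_u,N)\,du\le C'\int_{T_0}^T(T-u)^{-1/2}\,du<\infty$, so this integral is finite almost surely and \eqref{eq:driftfinite} follows.

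I do not anticipate a serious obstacle: the argument simply marries the global logarithmic gradient estimate (Theorem \ref{th:derivativeestiamte}) with the radial concentration estimates (Lemma \ref{lem:concrit}, Proposition \ref{prop:concencondthm}) already established. The points requiring some care are bookkeeping ones, namely verifying that the curvature and injectivity-radius assumptions genuinely supply \eqref{eq:assone} and \eqref{eq:asstwo}, handling the continuous extension of the martingale part $m^f$ to the closed interval via the random compactness of the bridge path, and invoking the standard localization that lets one pass from functions $f$ with controlled derivatives to arbitrary $f\in C^\infty(M)$ in the definition of a manifold-valued semimartingale.
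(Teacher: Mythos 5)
Your proposal is correct and takes essentially the same approach as the paper: both reduce to showing $\mathbb{E}^{x;N,T}\int_{T-\epsilon}^{T}\|\nabla\log p^M_{T-u}(X_u,N)\|\,du<\infty$, both feed the gradient estimate of Theorem \ref{th:derivativeestiamte} into the radial moment bounds coming from Lemma \ref{lem:concrit} via \eqref{eq:gentobas}, and both conclude by integrating a bound that is $O((T-u)^{-1/2})$ up to a log factor. The only difference is bookkeeping: the paper applies Cauchy--Schwarz to pass to $\mathbb{E}[\|\cdot\|^2]^{1/2}$ and then uses the second radial moment $\mathbb{E}^{x;N,T}[r_N^2(X_u)]\le C_\epsilon(T-u)$ inside the square root, whereas you take the pointwise square root, split it by subadditivity, and control the one random term with the first radial moment; both are valid.
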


\begin{proof}
It suffices to control the singularity in the drift close to the terminal time. Since the distance function $r_N$ is defined as an infimum over $N$, it follows from \eqref{eq:gentobas} and Lemma \ref{lem:concrit} that there exists $\epsilon,C_\epsilon>0$ such that $\mathbb{E}^{x;N,T}[r_N^2(X_t)] \leq C_\epsilon(T-t)$ for all $t \in \left(T-\epsilon,T\right]$. Therefore, by Theorem \ref{th:derivativeestiamte}, there exists $C>0$ such that
\begin{equation}
\begin{split}
 & \text{ }\mathbb{E}^{x;N,T}\left[\int_{T-\epsilon}^{T} \| \nabla \log p^M_{T-t}(X_t,N)\|dt\right]\\
\leq & \text{ } \int_{T-\epsilon}^{T} \mathbb{E}^{x;N,T}\left[\| \nabla \log p^M_{T-t}(X_t,N)\|^2\right]^{\frac{1}{2}} dt\\
\leq & \text{ } \sqrt{C}\int_{T-\epsilon}^{T} \left(\frac{1}{T-t}+\frac{\dimn}{T-t}\log\frac{1}{T-t}+\frac{\mathbb{E}^{x;N,T}\left[r^2_N(X_t)\right]}{(T-t)^2}\right)^{\frac{1}{2}} dt\\
\leq & \text{ } \sqrt{C}\int_{T-\epsilon}^{T} \left(\frac{1}{T-t}+\frac{\dimn}{T-t}\log\frac{1}{T-t}+\frac{C_\epsilon}{T-t}\right)^{\frac{1}{2}} dt\\
< & \text{ } \infty
\end{split}
\end{equation}
and the result follows.
\end{proof}

Note that we could have derived our gradient estimate using assumptions slightly weaker than those of Theorem \ref{th:derivativeestiamte}, using Ricci curvature, the sectional curvature of planes containing the radial direction and lower regularity of the volume measure. To derive our Hessian estimate we need only add to these assumptions some suitable control on the curvature two-form and the derivative of the Ricci tensor. For simplicity, however, we state our Hessian estimate, as we did the gradient estimate, in terms of injectivity radius and the full curvature tensor. This way, our estimates can be easily compared to that of Cheng, Li and Yau, stated above for the case in which the injectivity radius of $M$ is positive.

\begin{corollary}\label{cor:hesscor}
In addition to the assumptions of Theorem \ref{th:derivativeestiamte} suppose also that the first covariant derivative of the curvature tensor is bounded. Then for all $T>0$ there exists a constant $C>0$ such that
\begin{equation}
\|\Hess \log p^M_t(\cdot,N)_x\| \leq C\left(\frac{1}{t}+\frac{\dimn}{t}\log\frac{1}{t}+\frac{d^2(x,N)}{t^2}\right)
\end{equation}
for all $x \in M$ and $t \in \left(0,T\right]$.
\end{corollary}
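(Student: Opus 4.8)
The plan is to adapt the argument already carried out for Theorem~\ref{th:derivativeestiamte}, using Stroock's treatment of the Hessian in \cite{MR1715265} in place of his treatment of the gradient. The starting point is the elementary identity
\[
  \Hess \log u = \frac{\Hess u}{u} - \nabla \log u \otimes \nabla \log u,
\]
valid for any positive smooth $u$. Taking $u = p^M_t(\cdot,N)$, the second term has operator norm equal to $\| \nabla \log p^M_t(\cdot,N)\|^2$, which is already bounded by a constant multiple of $\tfrac1t+\tfrac{\dimn}{t}\log\tfrac1t+\tfrac{d^2(x,N)}{t^2}$ by Theorem~\ref{th:derivativeestiamte}. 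It therefore suffices to prove the same bound for $\| \Hess p^M_t(\cdot,N)_x\| / p^M_t(x,N)$.

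For this I would replace the first-order differentiation formula \eqref{eq:formder} by a second-order version of it --- of the type obtained by Elworthy--Li \cite{MR1297021} and used by Stroock \cite{MR1715265}. Writing $f := p^M_{t/2}(\cdot,N)$, so that $P_{t/2}f = p^M_t(\cdot,N)$ by Tonelli's theorem and the Chapman--Kolmogorov equation, this formula expresses $\langle \Hess(P_{t/2}f)_x v,v\rangle$ as the expectation of $f(X_{t/2}(x))$ (and possibly $\nabla f(X_{t/2}(x))$, depending on which form of the formula one uses) against an explicit functional of the horizontal lift built from $\mathcal{Q}$, the curvature two-form and the first covariant derivative of the Ricci tensor. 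This is precisely where the extra hypothesis bounding the first covariant derivative of the curvature tensor enters, since it controls $\nabla\Ric$ and provides the integrability needed below; the first-derivative terms, if present, are handled using Theorem~\ref{th:derivativeestiamte}. One then runs the entropy inequality of Lemma~\ref{lem:stroocklem} with $\phi := f(X_{t/2}(x))/P_{t/2}f(x)$ exactly as in the proof of Theorem~\ref{th:derivativeestiamte}: the term $\mathbb{E}[\phi\log\phi]$ equals $h_{t/2}(x;p^M_{t/2}(\cdot,N))$, which by \eqref{eq:htdefn} is at most $\sup_{z\in M}\log\big(p^M_{t/2}(z,N)/p^M_t(x,N)\big)$, and inserting the lower bound of Theorem~\ref{th:quadcurvlb} for the denominator and the upper bound of Theorem~\ref{th:grigub} for the numerator produces the $\tfrac{\dimn}{t}\log\tfrac1t$ and $\tfrac{d^2(x,N)}{t^2}$ contributions, while the complementary $\log\mathbb{E}[e^\psi]$ term contributes the remaining $O(1/t)$ and the curvature-derivative correction only an $O(1)$ term absorbed into the constant.

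Following the paper's convention of referring to \cite{MR1715265} for the details, I would write out only this skeleton --- the decomposition above, the reduction to $\| \Hess p^M_t(\cdot,N)_x\|/p^M_t(x,N)$, and the insertion of Theorems~\ref{th:quadcurvlb}, \ref{th:grigub} and \ref{th:derivativeestiamte} --- and defer the second-order differentiation formula together with its moment bounds to that reference. The main obstacle is exactly this deferred point: unlike the first-order case, the functional appearing in the second-order formula is not the exponential of a Gaussian, so bounding $\log\mathbb{E}[e^\psi]$ is no longer a one-line Cauchy--Schwarz estimate but requires the iterated-stochastic-integral and $L^\mom$ estimates of \cite{MR1715265}, which is where the bounds on the curvature two-form and on $\nabla\Ric$ are genuinely needed. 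Granting those, the remainder of the argument is a routine repetition of the proof of Theorem~\ref{th:derivativeestiamte}.
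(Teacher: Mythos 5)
Your proposal is correct and takes essentially the same route as the paper: both use the decomposition $\Hess\log u = \Hess u/u - d\log u \otimes d\log u$, bound the rank-one term by Theorem~\ref{th:derivativeestiamte}, defer to Stroock \cite{MR1715265} for the inequality $t\|\Hess(P_tf)_x\|/P_tf(x)\leq C(1+h_t(x;f))$, and then insert the lower bound \eqref{eq:lbforder} and upper bound \eqref{eq:ubforder}. The only cosmetic difference is that you unpack slightly more of what is inside Stroock's black box (the second-order differentiation formula and where the bound on $\nabla\Ric$ is used) before deferring, whereas the paper quotes his inequality directly.
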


\begin{proof}
It was proved by Stroock in \cite{MR1715265} that for any continuous positive function $f$ there exists $C>0$ such that
\begin{equation}
t\frac{\|\Hess (P_t f)_x\|}{P_tf(x)} \leq C\left(1+h_t(x;f)\right)\nonumber
\end{equation}
for all $x \in M$ and $t\in \left(0,T\right]$ where $h_t(x;f)$ is defined by \eqref{eq:htdefn}. Choosing $f=p^M_t(\cdot,N)$, using the lower bound \eqref{eq:lbforder} and the on-diagonal upper bound \eqref{eq:ubforder} we obtain the corollary, by Theorem \ref{th:derivativeestiamte} and the fact that
\begin{equation}
\Hess \log P_tf = \frac{\Hess P_t f}{P_tf} - d \log P_t f \otimes d \log P_t f
\end{equation}
for all $t >0$.
\end{proof}

Note that the constant $C$ appearing in this estimate depends only on the injectivity radius of $M$, the length of the time interval $T$, the dimensions $\dimm$ and $\dimn$ and the bounds on the curvature tensor and its derivative.

\subsection{Further Applications}\label{ss:furtherapps}

The results in this article, in particular the estimate on the Hessian, will lead to a study of the space of continuous paths which end on a submanifold, potentially shedding light on the relationship between the geometry of the path space, the intrinsic geometry of the ambient manifold and the extrinsic geometry of the submanifold, extending recent work of Naber \cite{Naber} to the submanifold setting. A related application, which also concerns the extrinsic geometry of the submanifold, is the study of mean curvature flow (MCF). MCF typically concerns the codimension one $\dimn = \dimm -1$ case, where the normal component of the velocity of a point on a hypersurface $N$ is given by the mean curvature at that point. An important object here is the $F$-functional
\begin{equation}
F_{x,t}(N) := (2 \pi t)^{\frac{1}{2}} p_t^{M}(x,N)
\end{equation}
introduced by Huisken \cite{MR1030675} (and for which Theorem \ref{th:ihkform} now provides a probabilistic formula). The monotonicity of this functional under MCF is the reason for its importance. The \textit{entropy} $\lambda(N)$ of the hypersurface (see \cite[Section~3]{MR3312634}) is then defined by $\lambda(N) := \sup_{x,t} F_{x,t}(N)$. The entropy captures something of the extrinsic complexity of $N$ and determines which singularities can form as a result of the flow (and by Theorem \ref{th:quadcurvlb}, the entropy is always greater than or equal to $1$). According to Subsection \ref{ss:hltcomp}, one can interpret the $F$-functional as the ratio
\begin{equation}
F_{x,t}(N) = \frac{\frac{d}{dt}\mathbb{E}[L^N_t(X(x))]}{\frac{d}{dt}\mathbb{E}[L^{\mathbb{R}^{\dimm-1}}_t(X(0))]}.
\end{equation}
Indeed, the rate at which the expected local time increases for a hypersurface with high mean curvature ought to be greater than that for, say, a minimal hypersurface. So MCF has an interpretation in terms of hypersurface local time which could lead to further applications of our results.

Less abstractly, note that any non-degenerate It\^{o} diffusion in $\mathbb{R}^\dimm$, constrained to arrive in a submanifold at a fixed positive time, such as on a level set of a function or on the boundary of a domain, is a process of the type considered in this article (any non-degenerate diffusion on a smooth manifold induces a Riemannian metric, via the principle symbol of its generator, with respect to which it is a Brownian motion with drift). Multi-dimensional It\^{o} diffusions constrained by the value of an observable at a fixed future time could have applications in finance. Alternatively, consider the case of a particle diffusing inside a biological cell and which is known to exit the cell, via the membrane, at a particular time. Then the possible trajectories of the particle are distributed as those of the corresponding conditioned diffusion (and the metric could even be used to capture some of the internal structure of the cell).

Finally, a connection to statistics is given by the problem of manifold denoising (see \cite{NIPS2006_2997}), in which data is drawn from sampling a probability distribution supported around a submanifold of Euclidean space. The idea is then to reconstruct the submanifold (or derive certain topological properties of it, as in \cite{MR2383768}). The integrated heat kernel $p^{\mathbb{R}^\dimm}_{\sigma^2}(x,N)$ (normalized by the hypersurface volume $\vol_N(N)$) yields a probability distribution corresponding to the case in which observations of the submanifold are made subject to a Gaussian error.

\end{document}